\newcommand{\Conf}{\mathrm{Conf}}
\newcommand{\id}{\mathrm{id}}
\newcommand{\pt}{\mathrm{pt}}
\definecolor{coloryellow}{RGB}{240,228,66}
\definecolor{colorskyblue}{RGB}{86,180,233}
\definecolor{colorvermillion}{RGB}{213,94,0}
\newcommand{\graphfont}{\mathsf}
\newcommand{\stargraph}[1]{\graphfont{S}_{#1}}
\newcommand{\graf}{\graphfont{\Gamma}}
\DeclareSymbolFont{sfletters}{OT1}{cmss}{m}{n}
\DeclareMathSymbol{\sTheta}{\mathord}{sfletters}{"02}
\theoremstyle{definition}
\newtheorem{definition}{Definition}[section]
\newtheorem{example}[definition]{Example}
\newtheorem{construction}[definition]{Construction}
\theoremstyle{plain}
\newtheorem{proposition}[definition]{Proposition}
\newtheorem{lemma}[definition]{Lemma}
\newtheorem{corollary}[definition]{Corollary}
\newtheorem{theorem}[definition]{Theorem}
\theoremstyle{remark}
\newtheorem{remark}[definition]{Remark}
    \DeclareFontFamily{U}{wncy}{}
    \DeclareFontShape{U}{wncy}{m}{n}{<->wncyr10}{}
    \DeclareSymbolFont{mcy}{U}{wncy}{m}{n}
    \DeclareMathSymbol{\Sha}{\mathord}{mcy}{"58}
\newsavebox{\foobox}
\title{On the stabilization of the topological complexity of graph braid groups}
\author{Ben Knudsen}
\email{b.knudsen@northeastern.edu}
\address{Department of Mathematics, Northeastern University, Boston, MA 02115, USA}
\begin{document}

\begin{abstract}
We establish a strong, geometric lower bound on the (sequential) topological complexity of the unordered configuration spaces of a general graph. As an application, we show that, for most graphs, the topological complexity eventually stabilizes at its maximal possible value, a direct analogue of a stability phenomenon in the ordered setting first conjectured by Farber. We estimate the stable range in terms of the number of trivalent vertices.
\end{abstract}

\maketitle

\section{Introduction}

Collision-free motion planning on graphs is a well-studied problem of significant theoretical and practical interest. One measure of the difficulty of this problem is provided by the quantity $\mathrm{TC}(\Conf_k(\graf))$, where $\mathrm{TC}$ denotes Farber's topological complexity \cite{Farber:TCMP} and $\Conf_k(\graf)$ the configuration space of $k$ ordered points on $\graf$. In 2005, Farber formulated a conjecture regarding this quantity \cite{Farber:CFMPG}, which we find most illuminating separated into three statements.\footnote{This division is motivated by the study of homological stability phenomena. The possibility of a connection between stability in that context and in ours remains unexplored.} As a matter of notation, we write $m(\graf)$ for the number of vertices of $\graf$ of valence at least $3$.
\begin{itemize}
\item {\bf Stability.} For some $k_0$, the sequence $\left(\mathrm{TC}(\Conf_k(\graf))\right)_{k\geq k_0}$ is constant.
\item {\bf Stable value.} The equality $\mathrm{TC}(\Conf_{k_0}(\graf))=2m(\graf)$ holds.
\item {\bf Stable range.} We may take $k_0= 2m(\graf)$.
\end{itemize}
What Farber conjectured, and what the author later proved \cite{Knudsen:TCPGBGSM} and generalized to the sequential or higher topological complexity $\mathrm{TC}_r$ \cite{Rudyak:HATC}, is that these three statements hold provided $\graf$ is connected with $m(\graf)\geq 2$.

Somewhat surprisingly, the obvious analogue of Farber's conjecture is false for the unordered configuration spaces $B_k(\graf):=\Conf_k(\graf)/\Sigma_k$; indeed, it is already false for linear trees \cite{LuetgehetmannRecio-Mitter:TCCSFAGBG}. So little is known about these spaces, however, that it has so far been totally unclear whether this failure is a failure of stability per se, or rather a failure of the stable value or stable range to coincide with their ordered counterparts.

Our main result shows that, for most graphs, the failure is merely that of a weaker stable range. As a matter of terminology, we say that a vertex $v$ is non-separating if its complement is connected.

\begin{theorem}\label{thm:stability}
Let $\graf$ be a connected graph with $m(\graf)\geq2$, and fix $r>0$. If $\graf$ has no non-separating trivalent vertices, then there exists $k_0$ such that, for all $k\geq k_0$, we have the equality \[\frac{1}{r}\mathrm{TC}_r(B_k(\graf))=m(\graf).\] We may take $k_0$ to be $2m(\graf)$ plus the number of trivalent vertices of $\graf$.
\end{theorem}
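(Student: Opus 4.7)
The plan is to sandwich $\mathrm{TC}_r(B_k(\graf))$ between matching bounds equal to $r\cdot m(\graf)$. The upper bound is the easy side: once $k$ is large compared to the graph, the standard discrete model exhibits $B_k(\graf)$ as homotopy equivalent to a CW complex of dimension $m(\graf)$, and the universal estimate $\mathrm{TC}_r(X)\leq r\cdot\dim X$ for connected CW complexes yields $\mathrm{TC}_r(B_k(\graf))\leq r\cdot m(\graf)$. The whole content of the theorem is therefore the matching lower bound, advertised in the abstract as the first nontrivial such bound for unordered graph braid groups.

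For the lower bound I would work with the zero-divisor cup-length: $\mathrm{TC}_r(X)$ exceeds the length of any nonzero product of classes in the kernel of $\Delta^*\colon H^*(X^r;\field)\to H^*(X;\field)$. Following the template of the ordered case (Farber--Yuzvinsky for $r=2$, Knudsen for general $r$), the aim is to construct, near each essential vertex $v$ of $\graf$, a package of ``local'' degree-one classes in $H^*(B_k(\graf);\field)$, assemble them into a nonzero top product $\prod_v\omega_v\in H^{m(\graf)}(B_k(\graf);\field)$, and then use this to produce a nonzero product of $r\cdot m(\graf)$ zero-divisors in $H^*(B_k(\graf)^r;\field)$, built from the classes $p_i^*\omega_v$ and their mixed differences across the $r$ factors.

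The main obstacle is that the naive analogues of the Farber--Yuzvinsky local classes fail to be invariant under the symmetric group action, and hence vanish upon passing from $\Conf_k(\graf)$ to $B_k(\graf)$. The failure of the theorem for linear trees cited in the introduction reflects exactly this phenomenon. The hypothesis of no non-separating trivalent vertices is designed to salvage enough invariant classes: at a vertex of valence $\geq 4$ there is no obstruction to symmetrizing, whereas at a trivalent vertex (which must then be separating) one can produce an invariant class only at the cost of requiring one extra configuration point to be parked in a chosen component of the cut, so that a symmetric representative for the local class exists. This accounts precisely for the corrective term in the stable range $k_0=2m(\graf)+(\text{\# trivalent vertices})$. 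The bulk of the technical work I anticipate consists of constructing these invariant classes explicitly on the discrete model, proving nonvanishing of the top product $\prod_v\omega_v$, and then verifying that the resulting length-$rm(\graf)$ zero-divisor product in $H^*(B_k(\graf)^r;\field)$ is nontrivial, most plausibly by reduction to a direct calculation on the combinatorial cochain complex or via a transfer argument from the already-understood ordered case.
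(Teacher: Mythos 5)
Your upper bound is exactly the paper's: \'{S}wi\k{a}tkowski's cubical model of dimension $m(\graf)$ together with the dimensional upper bound for $\mathrm{TC}_r$. The problem is the lower bound, which is the entire content of the theorem, and there your proposal is a plan rather than a proof --- and the plan runs head-on into the obstruction the paper is explicitly designed to circumvent. The introduction states that progress has been blocked by ``a near total ignorance of cup products in the relevant cohomology rings,'' and the paper deliberately avoids cohomology altogether: the lower bound (Theorem \ref{thm:estimate}) is obtained at the level of the fundamental group, by exhibiting two ``toric'' homomorphisms $\mathbb{Z}^W\to\pi_1(B_k(\graf))$ whose images have disjoint conjugates (detected by the quotient maps $q_v$ onto local graphs, whose configuration spaces are homotopy equivalent to graphs and hence have free fundamental group), and then applying the Farber--Oprea estimate $\mathrm{TC}_r(BG)\geq \mathrm{cd}(H_0\times H_1\times G^{r-2})$ followed by a rational cohomological-dimension count. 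Your zero-divisor cup-length strategy requires (i) explicit degree-one classes $\omega_v\in H^1(B_k(\graf))$, (ii) nonvanishing of $\prod_v\omega_v$ in top degree, and (iii) nonvanishing of a length-$rm(\graf)$ product of zero-divisors in $H^*(B_k(\graf)^r)$. None of these is established in your sketch, and for graphs of positive first Betti number no such computation of the ring $H^*(B_k(\graf))$ was available; the fallback you suggest, transferring from the ordered case, is also problematic, since the ordered classes need not descend to invariant classes and transfer loses exactly the information you need. So the gap is not a missing detail but the central construction.

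A secondary point: your heuristic for the stable range --- one extra ``parked'' particle per separating trivalent vertex so that a symmetric local class exists --- does track the paper's bookkeeping ($k(v)=3$ for separating trivalent $v$ versus $k(v)=2$ otherwise, giving $k_0=2m(\graf)$ plus the number of trivalent vertices), but the actual mechanism is different: at a separating trivalent vertex the relevant local loop is an edge stabilization $\sigma_{e_{a+1}}\circ\epsilon$ landing in $B_3(\stargraph{3})$, and disjointness of conjugates is verified by an explicit commutator or product computation in the free group $\pi_1(B_k(\graphfont{\Lambda}(\pi)))$ (Lemmas \ref{lem:local calculation} and \ref{lem:other local calculation}), not by symmetrizing cochains. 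To salvage a cohomological argument you would in effect have to recover these group-theoretic facts as statements about cup products of classes in $H^1$, which is not easier than the route the paper takes.
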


In particular, the exact analogue of Farber's conjecture holds in the unordered setting for graphs without trivalent vertices. It should be emphasized that the unordered setting was previously too mysterious even to warrant a conjecture.

The theorem will be an easy consequence of the following general lower bound.

\begin{theorem}\label{thm:estimate}
Let $\graf$ be a connected graph with $m(\graf)\geq2$, and fix $r>1$. Choose $c_i\in\mathbb{N}$ for $i\in\{0,1,2\}$ bounded above, respectively, by the number of vertices of valence at least $4$, the number of separating trivalent vertices, and the number of non-separating trivalent vertices of $\graf$. For any $k\geq 2(c_0+c_2)+3c_1$, we have the inequality \[\mathrm{TC}_r(B_k(\graf))\geq (r-2)\min\left\{\left\lfloor\frac{k}{2}\right\rfloor,\,m(\graf)\right\}+2(c_0+c_1)+c_2.\]
\end{theorem}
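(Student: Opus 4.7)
My plan is to apply the classical cup-length lower bound for higher topological complexity: $\mathrm{TC}_r(X)$ is bounded below by the length of any nonzero product in the kernel of the iterated cup product $H^*(X)^{\otimes r}\to H^*(X)$. The task splits into (i) exhibiting enough cohomology classes in $H^*(B_k(\graf))$ attached to essential vertices of $\graf$, and (ii) assembling from them a sufficiently long nonzero product of zero-divisors.

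For step (i) I would work in a discrete cubical model of $B_k(\graf)$ and attach to each essential vertex $v$ of valence $h_v$ a class $\alpha_v\in H^{h_v-2}(B_k(\graf))$ whose cycle representative is supported near $v$. At a vertex of valence at least four, the star itself supports a second independent class $\beta_v$ with $\alpha_v\cup\beta_v\neq 0$; at a separating trivalent vertex, one obtains a companion degree-one class by parking an auxiliary particle on the far side of the separation, again with $\alpha_v\cup\beta_v\neq 0$. At a non-separating trivalent vertex, no such companion exists because the two sides of $v$ merge into a single connected piece, and this is the precise source of the weaker contribution of such vertices to the final bound.

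For step (ii) I would choose disjoint subsets $V_0,V_1,V_2$ of vertices of the three respective types of sizes $c_0,c_1,c_2$ and an additional set $V'$ of essential vertices bringing the total to $m':=\min\{\lfloor k/2\rfloor,m(\graf)\}$. The hypothesis $k\geq 2(c_0+c_2)+3c_1$ is precisely what is needed to realize all chosen local classes simultaneously on the discrete model: each vertex in $V_0\cup V_2$ consumes two particles and each vertex in $V_1$ consumes three, while the vertices of $V'$ fit into the remaining budget. From the local classes, form the $r-1$ canonical zero-divisors $\bar\alpha_v^{(i)}=p_i^*\alpha_v-p_1^*\alpha_v$, and similarly $\bar\beta_v^{(i)}$ when $\beta_v$ exists. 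A suitable cup-product of $(r-2)m'$ such $\bar\alpha_v$-divisors at the baseline, supplemented by two additional factors at each vertex of $V_0\cup V_1$ and one at each vertex of $V_2$, totals $(r-2)m'+2(c_0+c_1)+c_2$ zero-divisors, matching the claimed bound.

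The main obstacle will be verifying that this final product is nonzero in $H^*(B_k(\graf)^r)$. Expanding each difference $\bar\alpha_v^{(i)}$ through the K\"unneth isomorphism produces a large alternating sum, and the argument amounts to isolating a single surviving tensor monomial whose potential cancellation partners vanish. Disjointness of the local supports of the $\alpha_v$ and $\beta_v$ at distinct essential vertices is essential to controlling this expansion, and the computation ultimately reduces, through the vertex-by-vertex tensor structure of the discrete model, to the local nonvanishing statements identified in step (i).
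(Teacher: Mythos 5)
Your strategy---the zero-divisor cup-length lower bound---is not the one the paper uses, and it runs head-on into the obstruction the paper is explicitly built to avoid: the cup product structure of $H^*(B_k(\graf))$ is essentially unknown for graphs of positive first Betti number. The paper works instead at the level of $\pi_1$, via the Farber--Oprea theorem $\mathrm{TC}_r(BG)\geq\mathrm{cd}(H_0\times H_1\times G^{r-2})$ for subgroups with disjoint conjugates: it builds free abelian subgroups from local loops at the chosen essential vertices (the ``toric'' homomorphisms), verifies disjointness of conjugates by pushing forward to configuration spaces of local quotient graphs, and then computes the cohomological dimension of the \emph{external} product over $\mathbb{Q}$ by the K\"{u}nneth theorem. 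No internal cup product in $H^*(B_k(\graf))$ is ever needed.

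The genuine gap is the nonvanishing claim in your step (ii). First, a local problem: the configuration space of (a quotient of) the star of a single essential vertex is aspherical with free fundamental group, hence homotopy equivalent to a wedge of circles, so any two positive-degree classes both detected there have vanishing product; your claimed $\alpha_v\cup\beta_v\neq 0$ at a single vertex fails, and your degree count $h_v-2$ is also off (the classes attached to essential vertices of unordered graph configuration spaces all live in degree $1$; the top nonzero degree is $\min\{\lfloor k/2\rfloor,m(\graf)\}$, not a sum of local valence contributions). One can still hope to survive on cross terms in the K\"{u}nneth expansion, but every surviving monomial then contains \emph{internal} products $\alpha_{v_1}\cdots\alpha_{v_j}$ of classes at distinct vertices, of length up to $\min\{\lfloor k/2\rfloor,m(\graf)\}$, and nothing in your argument shows these are nonzero---disjointness of supports is perfectly consistent with all of them vanishing, and establishing their nonvanishing is exactly the open combinatorial problem (cup products in the discrete Morse/cubical models) that the paper circumvents. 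The same issue blocks the $(r-2)\min\{\lfloor k/2\rfloor,m(\graf)\}$ term: counting that many zero-divisor factors requires a nonzero product of that many degree-one classes, whereas the paper needs only the nonvanishing of a single rational homology group in that degree. Without supplying these cup-product computations, the proposal does not close.
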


Prior to the techniques introduced in our work, the topological complexity of the unordered configuration spaces of graphs with positive first Betti number was essentially entirely unknown (see \cite{Scheirer:TCUCSCG,Hoekstra-Mendoza:BHTCCST} for prior work on trees). The obstruction to progress in this direction was ignorance of the relevant cohomology rings, which remains. We avoid this difficulty by working at the level of the fundamental group using results of \cite{FarberOprea:HTCAS}, following \cite{GrantLuptonOprea:NLBTCAS}, on the (sequential) topological complexity of aspherical spaces. By arguing geometrically, we further circumvent the fearsome combinatorial challenges posed by the standard Morse theoretic presentations of these groups \cite{FarleySabalka:DMTGBG,FarleySabalka:PGBG}. This geometric flavor is maintained through the use of configuration spaces with sinks, an approach inspired by \cite{LuetgehetmannRecio-Mitter:TCCSFAGBG} and already present implicitly in \cite{Knudsen:TCPGBGSM}.

\subsection{Relation to other work}
The strategy of this paper was adapted from the author's proof of Farber's conjecture \cite{Knudsen:TCPGBGSM}---see the survey \cite{Knudsen:FCB} for a more leisurely overview of these ideas.

As shown by Jankiewicz--Schreve, the conclusion of Theorem \ref{thm:stability} holds without the assumption on non-separating trivalent vertices \cite{JankiewiczSchreve:PFGIGBG}. Those authors largely follow our strategy, the added ingredient being an appeal to the work of \cite{CrispWiest:EGBSGRAAGBG} relating graph braid groups and right-angled Artin groups. In the end, the key fact is that the latter type of group is residually torsion-free nilpotent. Thus, despite beginning alike, the two arguments are of radically different flavor---ours direct and geometric, theirs indirect and algebraic.

The present work appeared as a preprint in substantially final form in February 2023, and the survey paper \cite{Knudsen:FCB} explaining its techniques and those of \cite{Knudsen:TCPGBGSM} in February 2024. While the survey was published later in 2024, no referee report regarding the present work appeared until December 2025. The work of Jankiewicz--Schreve appeared as a preprint in April 2024 and was published in October 2025.

\subsection{Future directions} Our work prompts a number of questions.
\begin{itemize}
\item {\bf Stable range.} How does the topology of $\graf$ influence the stable range? In the ordered setting, there are examples with $k_0=2$ and $m(\graf)$ arbitrarily large \cite{GonzalezGonzalez:ADGFCACSTPCG}, but a systematic understanding remains elusive.
\item {\bf Unstable values.} What can be said about the topological complexity of $B_k(\graf)$ and $\Conf_k(\graf)$ for $k<k_0$?
\item {\bf Monotonicity.} For a graph $\graf$, is the function $k\mapsto\mathrm{TC}(B_k(\graf))$ non-decreasing? As shown in \cite{LuetgehetmannRecio-Mitter:TCCSFAGBG}, the corresponding question in the ordered setting has an affirmative answer. In either setting, does monotonicity hold with $\graf$ replaced by a general topological space?
\end{itemize}

\subsection{Conventions and notation} Given an equivalence relation $\pi$ on a set $I$, we write $[i]$ for the equivalence class of $I\in I$ and $I/\pi$ for the set of all such equivalence classes. A partition of a number $n$ is a tuple $p=(p_i)_{i\in I}$ with $I$ a finite set such that $p_i\geq0$ for $i\in I$ and $\sum_{i\in I}p_i=n$. In particular, partitions have distinguishable blocks, some of which may be trivial.

We write $\mathrm{cd}_R(G)$ for the cohomological dimension of the group $G$ over the commutative ring $R$, which is to say the minimal length of a projective resolution of the trivial module $R$ over the group ring $R[G]$. We set $\mathrm{cd}(G)=\mathrm{cd}_\mathbb{Z}(G)$. We denote units of groups generically by $1$.

Given a path $\gamma$ in a space $X$, we denote its path homotopy class by $[\gamma]$ and its change-of-basepoint isomorphism by $\hat\gamma:\pi_1(X,\gamma(0))\to \pi_1(X,\gamma(1))$. When there is no risk of confusion, we omit basepoints from our notation for fundamental groups.

A graph is a finite CW complex $\graf$ of dimension at most $1$. The degree or valence $d(v)$ of a vertex $v$ of $\graf$ is the number of connected components of its complement in a sufficiently small open neighborhood. A vertex $v$ is called essential if $d(v)\geq 3$ and separating if $\graf\setminus\{v\}$ is disconnected. Edges of graphs are always open cells; when referring to the closed cell corresponding to the edge $e$, we write $\bar e$.

We use the ``reduced'' convention for topological complexity, in which $\mathrm{TC}(\pt)=0$. In dealing with the sequential topological complexity $\mathrm{TC}_r$, our convention is that $\mathrm{TC}_1=\mathrm{cat}$, the Lusternik--Schnirelmann category.

\subsection*{Acknowledgements} This work benefited from conversations with Byunghee An, Teresa Hoekstra--Mendoza, and Jes\'{u}s Gonz\'{a}lez, as well as from the referee's careful reading. 

\subsection*{Funding declaration} This work was supported by NSF grant DMS-1906174.

\section{Configurations with controlled collisions}

It will be convenient to work with a mild generalization of classical configuration spaces, in which particles are permitted to collide in a fixed subspace. The case of interest for us, in which the background space is a graph and the subspace in question a set of vertices, was introduced in \cite{ChettihLuetgehetmann:HCSTL} and further studied in \cite{Ramos:CSGCPC}.

\begin{definition}
Let $X$ be a topological space and $S\subseteq X$ any subset. The ordered \emph{configuration space} of $k$ points in $X$ with \emph{collisions in $S$} is \[\Conf_k(X,S)=\{(x_1,\ldots, x_k)\in X^k: x_i=x_j\implies i=j \text{ or } x_i\in S\}.\] The \emph{unordered} configuration space is $B_k(X,S)=\Conf_k(X,S)/\Sigma_k$.
\end{definition}

In the classical case $S=\varnothing$, we simply write $\Conf_k(X)$ and $B_k(X)$.

\begin{remark}
The reader is urged not to conflate collision in a subspace with \emph{annihilation} in a subspace, as considered classically (for example) in \cite{Bodigheimer:SSMS}.
\end{remark}

In order to indicate the functoriality of these constructions, we introduce the following concepts.

\begin{definition}
We say that a map of pairs $f:(X,S)\to (Y,T)$ is a \emph{near monomorphism} if $f|_{f^{-1}(Y\setminus T)}$ is a monomorphism. A \emph{near monotopy} is a homotopy through near monomorphisms.
\end{definition}

The proof of the following simple fact is essentially immediate from the definitions and left to the reader.

\begin{proposition}\label{prop:functoriality}
Let $f:(X,S)\to (Y,T)$ be a near monomorphism.
\begin{enumerate}
\item The dashed filler exists in the following commutative diagram of $\Sigma_k$-spaces:
\[
\xymatrix{
\Conf_k(X,S)\ar[d]_-{\subseteq}\ar@{-->}[rr]^-{\Conf_k(f)}&&\Conf_k(Y,T)\ar[d]^-{\subseteq}\\
X^k\ar[rr]^-{f^k}&&Y^k
}
\]
\item If $f$ is near monotopic to $g$, then $\Conf_k(f)$ is $\Sigma_k$-equivariantly homotopic to $\Conf_k(g)$.
\end{enumerate}
\end{proposition}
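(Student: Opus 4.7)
The plan is to handle the two parts in sequence, with part (1) supplying the main content and part (2) following by applying part (1) pointwise in time.

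For part (1), I would fix a configuration $(x_1,\ldots,x_k)\in\Conf_k(X,S)$ and verify that its image $(f(x_1),\ldots,f(x_k))$ satisfies the defining condition of $\Conf_k(Y,T)$. Suppose $f(x_i)=f(x_j)$ with $i\neq j$; the goal is to show $f(x_i)\in T$. I would case split on whether both $x_i$ and $x_j$ lie in $f^{-1}(Y\setminus T)$. If so, the near monomorphism hypothesis forces $x_i=x_j$, and then the defining condition of $\Conf_k(X,S)$ gives $x_i\in S$, so $f(x_i)\in T$ because $f$ is a map of pairs. Otherwise at least one of $x_i, x_j$ lies in $f^{-1}(T)$, and since $f(x_i)=f(x_j)$ the common value lies in $T$ either way. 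Continuity of the factored map is automatic from the subspace topology on $\Conf_k(Y,T)\subseteq Y^k$, and $\Sigma_k$-equivariance is immediate since $f^k$ acts by the same function in every coordinate.

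For part (2), let $H\colon X\times I\to Y$ be a near monotopy between $f$ and $g$, meaning each $H_t$ is a near monomorphism $(X,S)\to(Y,T)$. I would define $\widetilde H\colon \Conf_k(X,S)\times I\to Y^k$ by $(x_1,\ldots,x_k,t)\mapsto(H_t(x_1),\ldots,H_t(x_k))$, which is continuous as a map into $Y^k$. Applying part (1) to $H_t$ for each $t$ shows that $\widetilde H$ factors through $\Conf_k(Y,T)\subseteq Y^k$, and the resulting map is the desired $\Sigma_k$-equivariant homotopy from $\Conf_k(f)$ to $\Conf_k(g)$.

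There is no genuine obstacle here; the statement is a bookkeeping exercise. The only point requiring any thought is the case analysis in part (1), which is exactly what the near monomorphism hypothesis is designed to accommodate: coincidences of images under $f$ are allowed, but only when they occur over $T$, which is precisely what permits them as points of $\Conf_k(Y,T)$.
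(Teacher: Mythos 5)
Your argument is correct and is exactly the routine verification the paper has in mind; the paper omits the proof entirely, declaring it ``essentially immediate from the definitions and left to the reader.'' The one point worth spelling out --- that a coincidence $f(x_i)=f(x_j)$ with $i\neq j$ forces the common value into $T$, via the case split on membership in $f^{-1}(Y\setminus T)$ --- is handled correctly, and the corestriction/continuity and equivariance remarks are the standard bookkeeping.
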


Thus, the constructions $\Conf_k$ and $B_k$ extend canonically to homotopy functors from the category of near monomorphisms between pairs of spaces and near monotopies among them. In particular, we obtain functoriality for quotient maps, as the following example shows.

\begin{example}\label{example:quotient}
Given a pair $(X,S)$ and a subset $A\subseteq X$ containing $S$, the projection $(X,S)\to (X/A, A/A)$ is a near monomorphism. More generally, the same holds for the projection to the quotient of $X$ by an equivalence relation. 
\end{example}

We turn now to the case of greatest present interest.

\begin{definition}
A \emph{graph with sinks} is a pair $(\graf, S)$, where $\graf$ is a graph and $S$ a subset of the vertices of $\graf$. The elements of $S$ are called \emph{sinks}.
\end{definition}

Although we are primarily interested in the classical configuration spaces of graphs, it will be convenient at times to allow collisions at sink vertices. In this way, we gain access to functoriality for quotients as in Example \ref{example:quotient}. When the set of sinks is understood from context, we will permit ourselves the (highly abusive) abbreviation $B_k(\graf,S)=B_k(\graf)$.

The fundamental fact about configuration spaces of graphs is that they classify their fundamental groups, the so-called graph braid groups.

\begin{theorem}[{\cite{Abrams:CSBGG,Swiatkowski:EHDCSG,Ghrist:CSBGGR}}]\label{thm:asphericity}
For any graph $\graf$ and $k\geq0$, the space $B_k(\graf)$ is aspherical.
\end{theorem}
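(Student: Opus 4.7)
The plan is to identify $B_k(\graf)$, up to homotopy equivalence, with a locally CAT(0) cube complex, whence asphericity is automatic via the Cartan--Hadamard theorem for nonpositively curved spaces. Concretely, I would replace $\graf$ by a sufficient subdivision and then deformation retract onto Abrams' discretized model $UD_k(\graf)$, whose $n$-cubes are indexed by unordered choices of $n$ pairwise disjoint closed edges together with $k-n$ pairwise disjoint vertices, all mutually disjoint from one another. A configuration in which $n$ particles sit in the interiors of distinct edges and the remaining $k-n$ sit at distinct vertices, disjoint from these edges, determines a point in the corresponding cube via its edge coordinates.

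First I would recall Abrams' sufficiency condition: it suffices that every path in $\graf$ between essential vertices contains at least $k-1$ edges and every essential cycle contains at least $k+1$ edges. Under this condition, there is an explicit $\Sigma_k$-equivariant deformation retraction from the ordered configuration space onto the discretized ordered model, obtained by simultaneously pushing each particle along its occupied edge toward a chosen endpoint in a way that respects the fixed edge-partitioning of particles. Passing to $\Sigma_k$-quotients yields the corresponding retraction $B_k(\graf)\simeq UD_k(\graf)$.

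Second, I would verify Gromov's link condition: the link of each vertex of $UD_k(\graf)$ is a flag simplicial complex. Since a family of edges and vertices of $\graf$ is pairwise disjoint if and only if every two-element subfamily is, every clique in the $1$-skeleton of a vertex link bounds a simplex, which is exactly flagness. This gives local CAT(0)-ness of $UD_k(\graf)$, and then the Cartan--Hadamard theorem provides a contractible universal cover, proving asphericity.

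The main obstacle is the first step: constructing and verifying the deformation retraction requires controlling particles that share an edge or cluster near an essential vertex, and the subdivision hypothesis is precisely what rules out the local configurations that would prevent one from pushing particles onto vertices compatibly. The link-condition and Cartan--Hadamard steps are, by contrast, essentially formal once the model is in place.
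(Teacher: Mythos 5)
The paper does not prove this statement---it is imported from the literature (Abrams, \'{S}wi\k{a}tkowski, Ghrist)---and your sketch is essentially Abrams' original argument from the first of those references: subdivide, deformation retract onto the discretized cube complex $UD_k(\graf)$, check Gromov's link (flagness) condition, and conclude by Cartan--Hadamard, with your subdivision constants being the improved Prue--Scrimshaw ones. The outline is correct, and you rightly identify the deformation retraction as the step carrying the real technical content.
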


\begin{remark}
The author is unaware of a reference establishing the asphericity of the configuration spaces of an arbitrary graph with sinks (although see \cite{Ramos:CSGCPC} for the case in which every vertex is a sink). He presumes that the generalization of the cubical complex of \cite{Swiatkowski:EHDCSG} exhibited in \cite{ChettihLuetgehetmann:HCSTL} can be shown to be CAT(0) through a straightforward adaptation of classical arguments. The enthusiastic reader is invited to check the details.
\end{remark}

In working with these spaces, we make use of a certain deformation retract. Fix a graph $\graf$, a subspace $S\subseteq\graf$, and a vertex $v$. Write $B_k(\graf,S)_v\subseteq B_k(\graf,S)$ for the subspace in which at most one particle lies in the open star of $v$.

\begin{proposition}\label{prop:deformation retract}
Let $(\graf,S)$ be a graph with sinks and $v$ a non-sink vertex. The inclusion $B_k(\graf,S)_v\subseteq B_k(\graf, S)$ is a deformation retract provided either $k\leq2$ or every non-sink vertex adjacent to $v$ has valence $2$ and no multiple edges.
\end{proposition}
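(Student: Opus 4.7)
My plan is to construct an explicit $\Sigma_k$-equivariant strong deformation retraction at the level of ordered configurations, descending to the unordered quotient. The geometric picture is to push ``excess'' particles in the open star of $v$ outward along their incident edges, continuing past the opposite endpoints when necessary.

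For $k\leq 2$, the argument is elementary. A configuration outside $B_k(\graf,S)_v$ has exactly two particles in the open star of $v$. I push the one farther from $v$ along its edge to the opposite endpoint; the other particle lies strictly closer to $v$, so no collision arises on a shared edge, and distinct edges at $v$ are disjoint away from $v$. A linear homotopy in the arclength parameter of the edge effects the retraction.

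For the main case, I enumerate the edges at $v$ as $e_1,\ldots,e_{d(v)}$, with opposite endpoints $w_1,\ldots,w_{d(v)}$. The no-multi-edges hypothesis forces the $w_j$ to be pairwise distinct, and the valence-$2$ hypothesis on each non-sink $w_j$ furnishes a unique continuation edge $e_j'$, producing an embedded arc $e_j\cup\bar e_j'$ emanating from $v$ (a single edge when $w_j$ is a sink). I then push all particles in the open star of $v$ outward along these arcs, keeping fixed the particle at $v$ or closest to $v$ and sliding the rest past the $w_j$. Within a single arc, order is preserved so no collision occurs; between arcs, the pairwise disjointness (near $v$) prevents interference; and particles outside the open star can be held fixed throughout.

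The principal obstacle is continuity of the flow across the vertex $w_j$, where a particle's coordinate transitions from $e_j$ to $e_j'$, together with coherence when several particles traverse the same $w_j$. I would resolve this by selecting, for each arc $e_j\cup\bar e_j'$, a continuous, order-preserving, monotone reparametrization that compresses the open-star portion toward $v$ while expanding the remainder outward, arranged so the motion is the identity outside a small neighborhood of the open star. The resulting time-$1$ map retracts onto $B_k(\graf,S)_v$, and the time-parameter family supplies the deformation.
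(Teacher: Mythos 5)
Your geometric picture---expelling excess particles from the star of $v$ along the arcs $e_j\cup\bar e_j'$ furnished by the bivalence hypothesis---is the right one, but the step you yourself flag as ``the principal obstacle'' is not resolved by your proposed fix, and that step is where essentially all of the content of the proposition lives. A ``continuous, order-preserving, monotone reparametrization'' chosen once for each arc is a configuration-independent self-map $f$ of $\graf$, and no such map can serve as the time-$1$ map of the retraction: since the open star of $v$ contains no sinks, a near monomorphism $f$ with $B_k(f)$ landing in $B_k(\graf,S)_v$ for \emph{every} configuration would need $f^{-1}$ of the open star to consist of at most one point, forcing $f$ to crush the remainder of the star into its complement---already impossible for $\stargraph{3}$ with no sinks. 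The displacement of each particle must therefore depend on the positions of all the others, and it is precisely the continuity of that dependence that is delicate: the identity of ``the particle closest to $v$'' jumps when two particles on distinct edges are equidistant from $v$ (your $k\le 2$ case, which pushes ``the one farther from $v$,'' has the same defect at ties); the per-arc compression must vary continuously as particles cross $v$ between arcs and as they enter or leave the far endpoint of $e_j'$, beyond which you have no room to push; and the particles already sitting on $\bar e_j'$ cannot be ``held fixed throughout,'' since the expelled particles must pass onto $\bar e_j'$ without colliding with them. (A minor further point: pairwise distinctness of the $w_j$ does not follow from the stated hypothesis when some neighbors of $v$ are sinks, though this is harmless because collisions at sinks are permitted.)

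The paper avoids constructing this flow altogether. It subdivides each edge joining $v$ to an adjacent sink, enlarges the collision locus to a subcomplex $A$ absorbing those sinks so that $B_k(\graf,S)\subseteq B_k(\graf',A)$ becomes a deformation retract via a near monotopy collapsing $A$ onto $S$, and then---every vertex adjacent to $v$ now being bivalent and non-sink---imports the local retraction of $B_k(\graf',A)$ onto $B_k(\graf',A)_v$ from \cite[Lem.~2.0.1]{AgarwalBanksGadishMiyata:DCCSG}. What you are attempting is in effect a from-scratch proof of that cited lemma; to complete it you would need an explicit, configuration-dependent formula for the outward flow, a symmetric and continuous rule for which particle survives in the star, and a verification of continuity at the loci listed above.
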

\begin{proof}
For each sink $w$ adjacent to $v$, subdivide each edge from $w$ to $v$ by adding a bivalent vertex, denoting the resulting graph by $\graf'$. Let $A_w$ denote the union of the (closed) edges from $w$ to each of the new vertices, and write $A$ for the union of the $A_w$ and $S$. The obvious map of pairs $(\graf,S)\to (\graf',A)$ gives rise to the following commutative diagram: \[\xymatrix{
B_k(\graf,S)_v\ar[d]_-\subseteq\ar[r]&B_k(\graf',A)_v\ar[d]^-\subseteq\\
B_k(\graf,S)\ar[r]&B_k(\graf',A).
}\] The horizontal maps are topological embeddings, so we may regard them as inclusions of subspaces. The obvious deformation of $A$ onto $S$ gives rise to a near monotopy supplying a deformation retraction of the bottom inclusion, and, by inspection, this deformation retraction preserves the subspaces in question. Thus, in order to conclude that the lefthand map is the inclusion of a deformation retract, it suffices to note that the righthand map is so by the construction of \cite[Lem. 2.0.1]{AgarwalBanksGadishMiyata:DCCSG}.
\end{proof}

\begin{remark}
The corresponding result holds for ordered configuration spaces, with the same proof. In the classical case $S=\varnothing$, this result is was mistakenly asserted in \cite[Prop. 2.2]{Knudsen:TCPGBGSM} without hypotheses. In this generality, it is false---consider, for example, the cycle with two edges. As shown here, this difficulty can always be avoided by subdividing.
\end{remark}

\begin{remark}
The proof of Proposition \ref{prop:deformation retract}, as well as the results below on configuration spaces of local graphs and maps among them, could equally be approached through the cubical model introduced in \cite{Swiatkowski:EHDCSG} and generalized in \cite{ChettihLuetgehetmann:HCSTL}. We choose to give a self-contained treatment.
\end{remark}

\section{Edge and sink stabilization}

In this section, we describe ``stabilization'' mechanisms increasing the number of particles occupying a fixed edge or sink in a continuous manner. See \cite{AnDrummondColeKnudsen:ESHGBG} for more on edge stabilization.

\begin{definition}\label{def:stabilizations}
Fix a graph $\graf$ with sinks $S$, a (parametrized) edge $e$ of $\graf$, and a sink $s\in S$.
\begin{enumerate}
\item The \emph{edge stabilization} map $\sigma_e:B_k(\graf,S)\to B_{k+1}(\graf,S)$ associated to $e$ is the map whose value on a configuration $x$ is obtained by replacing the subconfiguration $x\cap \overline{e}=\{x_i\}_{i=1}^\ell$, ordered according to the parametrization of $e$, with the collection $\left\{\frac{1}{2}(x_{i+1}-x_{i})\right\}_{i=0}^\ell$ of averages, where $x_0=0$ and $x_{\ell+1}=1$ are the endpoints of $e$.
\item The \emph{sink stabilization} map $\sigma_s:B_k(\graf,S)\to B_{k+1}(\graf,S)$ associated to $v$ is the map obtained by adding a particle at $s$.
\end{enumerate}
\end{definition}

Establishing the continuity of $\sigma_e$ is an easy exercise using the pasting lemma, and the continuity of $\sigma_s$ is essentially immediate. Changing the parametrization of $e$ changes $\sigma_e$ only up to homotopy.

We close with the following simple observation.

\begin{lemma}\label{lem:stabilization comparison}
Fix a graph $\graf$ with sinks $S$, a (parametrized) edge $e$ of $\graf$, and a sink $s\in S$. If $e$ is incident on $s$, then $\sigma_e$ and $\sigma_s$ are homotopic.
\end{lemma}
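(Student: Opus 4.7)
Parametrize $e$ as $[0,1]$ with the sink $s$ at $0$. Given $x\in B_k(\graf,S)$, write $x\cap\bar e=\{x_1,\ldots,x_\ell\}$ in parameter order, and adopt the conventions $x_0=0$, $x_{\ell+1}=1$. Then $\sigma_e(x)$ and $\sigma_s(x)$ agree with $x$ away from $\bar e$, and on $\bar e$ they consist, respectively, of the $\ell+1$ ordered points $y_i=\tfrac{1}{2}(x_i+x_{i+1})$ ($i=0,\ldots,\ell$) and $x_0,x_1,\ldots,x_\ell$. My plan is to linearly interpolate between these two $(\ell+1)$-tuples while leaving every other particle fixed.

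Explicitly, I would define $H\colon B_k(\graf,S)\times[0,1]\to B_{k+1}(\graf,S)$ by declaring $H(x,t)$ to agree with $x$ off $\bar e$, and on $\bar e$ to be the ordered collection
\[
z_i(t)=(1-t)\,\tfrac{x_i+x_{i+1}}{2}+t\,x_i, \qquad i=0,1,\ldots,\ell.
\]
At $t=0$ this recovers $\sigma_e(x)$, while at $t=1$ we have $z_0(1)=x_0=0=s$ and $z_i(1)=x_i$ for $i\geq 1$, recovering $\sigma_s(x)$.

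Two routine checks remain. First, each $z_i(t)$ lies in $[0,1]$ by linearity in $t$. Second, the consecutive differences
\[
z_i(t)-z_{i-1}(t)=(1-t)\,\tfrac{x_{i+1}-x_{i-1}}{2}+t\,(x_i-x_{i-1})
\]
are nonnegative, and any coincidence $x_{i-1}=x_i$ among the original particles is already forced to happen at a sink (namely $s=0$ or, when the opposite endpoint of $e$ also belongs to $S$, the vertex corresponding to $1$), in which case both $z_{i-1}(t)$ and $z_i(t)$ remain at that sink for all $t$. Hence $H(x,t)\in B_{k+1}(\graf,S)$ throughout.

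The main obstacle I expect is continuity of $H$ in $x$ across configurations where a particle enters or leaves $\bar e$ through $s$, at which the cardinality $\ell$ jumps by one. This is precisely the issue resolved by the pasting-lemma argument that establishes continuity of $\sigma_e$ itself: as a particle migrates across $s$, the interpolating point $z_0(t)$ continuously absorbs or releases it, and the two local formulas agree along the locus where the particle sits at $s$.
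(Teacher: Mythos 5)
Your proposal is correct in substance but takes a genuinely different route from the paper's. You construct an explicit straight-line homotopy between the interspersed averages of $\sigma_e$ and the configuration $\sigma_s(x)$ directly inside $B_{k+1}(\graf,S)$, and then must verify by hand that every intermediate configuration is legal and that the formula is continuous across the strata where the count $\ell$ jumps. The paper instead subdivides $e$ into $e'\cup e''$ with $e'$ adjacent to $s$, enlarges the sink set to $A=S\cup\overline{e'}$, and compares $\sigma_e$ with $\sigma_{e'}$ by a straight-line homotopy in $B_{k+1}(\graf',A)$ --- where legality of intermediate configurations is automatic because collisions are permitted on all of $\overline{e'}$ --- before collapsing $\overline{e'}$ to $s$ to recover $\sigma_s$; the bookkeeping is then absorbed by the near-monotopy functoriality of Proposition \ref{prop:functoriality}. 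Your version is more elementary and self-contained; the paper's outsources exactly the two verifications you flag to machinery already in place. One local imprecision in your write-up: the claim that a coincidence $x_{i-1}=x_i$ at a sink forces ``both $z_{i-1}(t)$ and $z_i(t)$ to remain at that sink for all $t$'' is false as stated (take $x_{i-1}=x_i=0$ and $x_{i+1}>0$; then $z_i(t)=(1-t)x_{i+1}/2$ leaves the sink). What actually saves the argument is that, for $t\in(0,1)$, the vanishing of $z_i(t)-z_{i-1}(t)=(1-t)\tfrac{x_{i+1}-x_{i-1}}{2}+t(x_i-x_{i-1})$ forces the \emph{triple} coincidence $x_{i-1}=x_i=x_{i+1}$, whose common value is necessarily a sink (else $x$ was already illegal), and there both $z$'s do sit at that sink; for $t\in\{0,1\}$ the maps are $\sigma_e$ and $\sigma_s$ themselves. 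Finally, note that particles may also enter or leave $\overline{e}$ through the endpoint opposite $s$, so the continuity check must be carried out there as well, though it is no harder than the case you describe.
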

\begin{proof}
Write $\graf'$ for the graph obtained by subdividing $e$ once. We denote the corresponding edges of $\graf'$ by $e'$ and $e''$, where $e'$ is incident on $s$, and we write $A$ for the union of $S$ with the closure of $e'$. Consider the diagram
\[
\xymatrix{B_k(\graf, S)\ar[d]_-{\sigma_{e}}\ar@{=}[r]^-\sim&B_k(\graf', S)\ar@{-->}[d]\ar[r]&B_k(\graf', A)\ar[d]^-{\sigma_{e'}}\ar[r]&B_k(\graf,S)\ar[d]^-{\sigma_s}\\
B_{k+1}(\graf,S)\ar@{=}[r]^-\sim&B_{k+1}(\graf', S)\ar[r]&B_{k+1}(\graf',A)\ar[r]&B_{k+1}(\graf,S)
}\] where the dashed map is determined by commutativity of the lefthand square; the map $\sigma_{e'}$ is defined as in Definition \ref{def:stabilizations}(1); the homeomorphisms are induced by the homeomorphism $(\graf,S)\cong(\graf',S)$; the middle pair of horizontal maps is induced by the inclusion $S\subseteq A$, and the rightmost pair of horizontal maps is induced by the map $(\graf',A)\to (\graf,S)$ collapsing the closure of $e'$ to $s$. The lefthand square commutes by fiat, the righthand square commutes by inspection, and the middle square commutes up to a straight line homotopy. Since each horizontal composite is homotopic to the respective identity map by Proposition \ref{prop:functoriality}, the claim follows.
\end{proof}

\section{Local graphs}

We now introduce a family of atomic graphs (with sinks) functioning as the building blocks of our later arguments.

\begin{definition}\label{def:local graph}
Let $I$ be a finite set and $\pi$ an equivalence relation on $I$. The \emph{local graph} on $\pi$ is the graph $\graphfont{\Lambda}(\pi)$ with set of vertices $\{v_0\}\sqcup\{v_{[i]}\}_{i\in I/\pi}$, in which $v_0$ and $v_{[i]}$ share a set of edges indexed by the equivalence class $[i]$. We regard every vertex of $\graphfont{\Lambda}(\pi)$ as a sink except for $v_0$.
\end{definition}

Write $\stargraph{I}$ for the cone on the discrete space $I$ with its canonical (parametrized) graph structure and $\partial=\partial\stargraph{I}$ for its set of univalent vertices. In the case $I=\{1,\ldots, n\}$, we write $\stargraph{n}$. We refer to $\stargraph{n}$ as the \emph{star graph} with $n$ edges.

\begin{example}\label{example:discrete}
The local graph on the discrete partition of $I$ is the graph with sinks $(\stargraph{I},\partial)$. We denote this local graph by $\widetilde{\graphfont{S}}_I$.\end{example}

\begin{example}\label{example:indiscrete}
The local graph on the indiscrete partition of $I$ is the graph with sinks $(\stargraph{I}/\partial, \partial/\partial)$. We denote this local graph (abusively) by $\stargraph{I}/\partial$.
\end{example}

For our purposes, the interest of local graphs is as follows (cf. \cite{LuetgehetmannRecio-Mitter:TCCSFAGBG}).

\begin{construction}\label{construction:local quotient}
Given a vertex $v$ of the graph $\graf$ (sinkless, without self-loops), regard two edges at $v$ as equivalent if they lie in the same component of $\graf\setminus\{v\}$. Writing $\pi_v$ for the resulting equivalence relation on the set of edges at $v$, and making the abbreviation $\graphfont{\Lambda}(v):=\graphfont{\Lambda}(\pi_v)$, there is an obvious quotient map $q_v:\graf\to \graphfont{\Lambda}(v)$ obtained by mapping any point outside the closed star of $v$ to the vertex of $\graphfont{\Lambda}(v)$ indexed by its connected component in $\graf\setminus \{v\}$.
\end{construction}

The map on configuration spaces induced by the various $q_v$ will be a key player in our main argument. As we now show, the configuration spaces of local graphs are themselves graphs, up to homotopy, and thus easily understood.

\begin{construction}
Fixing an equivalence relation $\pi$ on $I$ and $k\geq0$, let $\graphfont{\Lambda}_k(\pi)$ be the graph specified as follows.
\begin{enumerate}
\item For every partition $p=(p_{[i]})_{[i]\in I/\pi}$ of $k$ or of $k-1$, there is a vertex $v_p$.
\item\label{edge description} If $p'$ is obtained from $p$ by replacing $p_{[i]}$ by $p_{[i]}-1$, then $v_p$ and $v_{p'}$ share a set of edges indexed by the equivalence class $[i]$.
\end{enumerate}
We define a function $f_k:{\graphfont{\Lambda}}_k(\pi)\to B_k(\graphfont{\Lambda}(\pi))$ as follows.
\begin{enumerate}[resume]
\item The configuration $f_{k}(v_p)$ comprises $p_{[i]}$ particles located at $v_{[i]}$. If $p$ is a partition of $k-1$, then $f_{k}(v_p)$ also comprises a particle located at $v_0$.
\item If $p$ and $p'$ are as in (2), then the image under $f_{k}$ of the edge indexed by $j\in[i]$ is the path in which the particle at $v_0$ moves linearly onto $v_{[i]}$ along the edge of $\graphfont{\Lambda}(\pi)$ indexed by $j$.\footnote{The graphs $\graphfont{\Lambda}(\pi)$ and $\graphfont{\Lambda}_k(\pi)$ may be constructed as quotients of a disjoint union of singletons and unit intervals in a way that is canonical up to orientation. As such, their edges are canonically parametrized up to orientation, and it is in this sense that we intend the term ``linear.''}
\end{enumerate}
\end{construction}

Note that $f_{1}$ is a homeomorphism. In general, we have the following.

\begin{proposition}\label{prop:local graph}
The function $f_k:{\graphfont{\Lambda}}_k(\pi)\to B_k(\graphfont{\Lambda}(\pi))$ is a homeomorphism onto $B_k(\graphfont{\Lambda}(\pi))_{v_0}$. In particular, $f_k$ is a homotopy equivalence.
\end{proposition}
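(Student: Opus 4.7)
The plan is to exhibit $f_k$ as a continuous bijection onto $B_k(\graphfont{\Lambda}(\pi))_{v_0}$ and then invoke the fact that a continuous bijection from a compact space to a Hausdorff space is a homeomorphism. First I would check that $f_k$ is well defined and continuous. The two endpoint configurations of the arc assigned in clause (5) to an edge indexed by $j\in[i]$ and joining $v_{p'}$ (with $|p'|=k-1$) to $v_p$ (with $|p|=k$) agree with $f_k(v_{p'})$ and $f_k(v_p)$ respectively: moving the lone particle at $v_0$ to $v_{[i]}$ along the $j$-th edge of $\graphfont{\Lambda}(\pi)$ exactly accounts for the discrepancy between the two vertex configurations. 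Continuity then follows from the pasting lemma, and the image lies in $B_k(\graphfont{\Lambda}(\pi))_{v_0}$ because, at every point of $\graphfont{\Lambda}_k(\pi)$, at most one particle sits in the open star of $v_0$.

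Next I would prove bijectivity by inverting $f_k$ pointwise. Since the complement of the open star of $v_0$ in $\graphfont{\Lambda}(\pi)$ is precisely the set of sinks $\{v_{[i]}\}_{[i]\in I/\pi}$, any configuration $c\in B_k(\graphfont{\Lambda}(\pi))_{v_0}$ has all but at most one of its particles located at sinks; let $p_{[i]}$ count those at $v_{[i]}$. Exactly one of the following mutually exclusive cases holds: (a) all $k$ particles lie at sinks, and $c=f_k(v_p)$ for $p$ the resulting partition of $k$; (b) one particle sits at $v_0$ and the sink particles form a partition $p$ of $k-1$, whence $c=f_k(v_p)$; or (c) the remaining particle lies at parameter $t\in(0,1)$ along the edge of $\graphfont{\Lambda}(\pi)$ indexed by some $j\in[i]$, in which case $c$ is the $f_k$-image of the point at parameter $t$ on the edge of $\graphfont{\Lambda}_k(\pi)$ indexed by $j$ between $v_p$ and $v_{p'}$, where $|p|=k-1$ and $p'$ is obtained from $p$ by incrementing the $[i]$-entry. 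Injectivity is then immediate, since the partition, the edge label, and the parameter are all uniquely recovered from $c$.

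Finally, $\graphfont{\Lambda}_k(\pi)$ is a finite CW complex, hence compact, while $B_k(\graphfont{\Lambda}(\pi))$ is Hausdorff as a quotient of a subspace of a Hausdorff space by a finite group action, so the continuous bijection $f_k$ is a homeomorphism. For the ``in particular'' clause, the hypothesis of Proposition \ref{prop:deformation retract} is vacuously satisfied at $v_0$ because every vertex adjacent to it is a sink; hence $B_k(\graphfont{\Lambda}(\pi))_{v_0}\subseteq B_k(\graphfont{\Lambda}(\pi))$ is a deformation retract, and $f_k$ is a homotopy equivalence.

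I expect no real obstacle; the proof is essentially an unpacking of the definitions. The only point requiring attention is the bookkeeping when $|[i]|>1$: in that case $\graphfont{\Lambda}(\pi)$ has multiple parallel edges between $v_0$ and $v_{[i]}$, and the matching between these edges and those of $\graphfont{\Lambda}_k(\pi)$ must be traced consistently through the labels $j\in[i]$ at every step.
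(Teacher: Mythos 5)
Your proposal is correct and follows essentially the same route as the paper: continuity via the pasting lemma, injectivity plus the compact-to-Hausdorff argument to get a homeomorphism onto the image, the observation that $B_k(\graphfont{\Lambda}(\pi))_{v_0}$ consists exactly of configurations with $k-1$ particles at sinks to identify the image, and Proposition \ref{prop:deformation retract} for the homotopy equivalence. You simply spell out the surjectivity case analysis in more detail than the paper, which leaves it as ``immediate from this description.''
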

\begin{proof}
Continuity is immediate from the pasting lemma, and $f_k$ is obviously injective. Since the source is compact and the target Hausdorff, it follows that $f_k$ is a homeomorphism onto its image, which is contained in $B_k(\graphfont{\Lambda}(\pi))_{v_0}$ by construction. From the definition, a configuration lies in $B_k(\graphfont{\Lambda}(\pi))_{v_0}$ if and only if $k-1$ of its particles are distributed among the sink vertices. It is immediate from this description that every such configuration lies in the image of $f_{k}$, and the first claim follows. The second claim then follows from Proposition \ref{prop:deformation retract}.
\end{proof}

We will use this result to deduce a number of useful facts relating $\stargraph{I}$ and its cousins for various $I$. To begin, we observe two families of canonical maps among them, in addition to edge and sink stabilizations. First, given a subset $I\subseteq J$, the inclusion of $I$ induces a topological embedding on cones, from which we derive canonical maps $B_k(\stargraph{I})\to B_k(\stargraph{J}),$ and similarly for the local graphs of Examples \ref{example:discrete} and \ref{example:indiscrete}. We refer to these maps by the word \emph{embedding}. Second, the identity $(\stargraph{I}, \varnothing)\to ({\graphfont{S}}_I,\partial)$ induces a map $B_k(\stargraph{I})\to B_k(\widetilde{\graphfont{S}}_I)$, which we refer to by the word \emph{reduction}.

Before proceeding, we recall the following elementary fact.

\begin{lemma}\label{lem:subgraph retract}
A connected subgraph is a topological retract.
\end{lemma}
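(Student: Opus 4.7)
My plan is to construct a retraction $r:\graf\to\graf'$ explicitly, working cell by cell using the CW structure of $\graf$.

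First I would reduce to the case in which $\graf$ itself is connected. If $\graf$ has components $C_1,\ldots,C_n$, then, $\graf'$ being connected, there is a unique index (say $1$) with $\graf'\subseteq C_1$. Fixing any vertex $v_0\in V(\graf')$ and defining $r\equiv v_0$ on $C_2\cup\cdots\cup C_n$, it suffices to produce a retraction of $C_1$ onto $\graf'$, so I may assume $\graf$ is connected.

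Next, I would define $r$ on the $0$-skeleton by setting $r(v)=v$ for $v\in V(\graf')$ and $r(v)=v_0$ for every other vertex. To extend $r$ over a closed edge $\bar e$ of $\graf$ with endpoints $a,b$, I split into cases. If $\bar e\subseteq\graf'$, set $r|_{\bar e}=\id_{\bar e}$. Otherwise, $r(a)$ and $r(b)$ are both vertices of $\graf'$, and I choose any continuous path $\gamma_{\bar e}:[0,1]\to\graf'$ from $r(a)$ to $r(b)$ (using a fixed parametrization of $\bar e$) and set $r|_{\bar e}=\gamma_{\bar e}$. Such a path exists because a connected graph is path-connected; this follows from local path-connectedness, or by a direct induction on the number of edges.

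By construction, the restrictions of $r$ to any pair of closed cells agree on their intersection (which is either empty or a single vertex, on which $r$ was already fixed), so continuity follows from the coherent topology on $\graf$ via the pasting lemma. Since $r|_{\graf'}=\id_{\graf'}$, the subgraph $\graf'$ is indeed a topological retract. There is no real obstacle in this argument; the only nontrivial input is that a connected finite $1$-complex is path-connected, which is standard.
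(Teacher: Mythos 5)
Your proof is correct and rests on the same idea as the paper's: reduce to a connected ambient graph, then send each edge not contained in the subgraph to a path in the subgraph joining the images of its endpoints, with continuity from the pasting lemma. The only difference is organizational — you build the retraction in one cellular extension over the $1$-skeleton, whereas the paper assembles it inductively, one stray edge at a time, composing retractions — and this does not change the substance of the argument.
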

\begin{proof}
Let $\graphfont{\Delta}_0\subseteq\graphfont{\Delta}$ be a connected subgraph. Since the components of a graph are always topological retracts, and since retractions compose, we may assume without loss of generality that $\graphfont{\Delta}$ is itself connected. We proceed by induction on the number of edges of $\graphfont{\Delta}$ not contained in $\graphfont{\Delta}_0$, the base case being trivial. Choose an edge $e$ sharing a vertex with $\graphfont{\Delta}_0$, but not contained therein, and write $\graphfont{\Delta}'=\graphfont{\Delta}_0\cup \overline e$. Since $e$ shares a vertex with $\graphfont{\Delta}_0$, the subgraph $\graphfont{\Delta}'$ is also connected, hence a topological retract by induction; therefore, it suffices to show that $\graphfont{\Delta}_0$ is a topological retract of $\graphfont{\Delta}'$. If $e$ shares only one vertex with $\graphfont{\Delta}_0$, then a retraction is obtained by collapsing $\overline e$ to this one vertex; otherwise, a retraction is obtained by choosing any path in $\graphfont{\Delta}_0$ between the vertices of $e$.
\end{proof}

\begin{corollary}\label{cor:homotopy retracts}
For finite sets $I$ and $J$, the following maps admit retractions up to homotopy.
\begin{enumerate}
\item The embedding $B_k(\stargraph{I})\to B_k(\stargraph{J})$ for $I\subseteq J$.
\item The embedding $B_k(\widetilde{\graphfont{S}}_I)\to B_k(\widetilde{\graphfont{S}}_J)$ for $I\subseteq J$.
\item The embedding $B_k(\stargraph{I}/\partial)\to B_k(\stargraph{J}/\partial)$ for $I\subseteq J$.
\item Any sink stabilization $B_k(\graphfont{\Lambda}(\pi))\to B_{k+1}(\graphfont{\Lambda}(\pi))$.
\end{enumerate}
The following maps are homotopy equivalences.
\begin{enumerate}[resume]
\item The sink stabilization $B_k(\stargraph{I}/\partial)\to B_{k+1}(\stargraph{I}/\partial)$.
\item The reduction $B_k(\stargraph{I})\to B_k(\widetilde{\graphfont{S}}_I)$.
\end{enumerate}
\end{corollary}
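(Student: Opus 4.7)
The plan is to reduce each statement, via Propositions \ref{prop:deformation retract} and \ref{prop:local graph}, to a combinatorial question about the graph $\graphfont{\Lambda}_k(\pi)$; claims (2)--(5) then follow directly from Lemma \ref{lem:subgraph retract}, while (6) requires additional work, and (1) is a formal consequence of (2) and (6).

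First I would treat (2)--(5). For (2), the embedding $B_k(\widetilde{\graphfont{S}}_I)\to B_k(\widetilde{\graphfont{S}}_J)$ preserves the condition defining $B_k(-)_{v_0}$, and the homeomorphism $f_k$ of Proposition \ref{prop:local graph} identifies its restriction with the inclusion of graphs $\graphfont{\Lambda}_k(\pi)\subseteq\graphfont{\Lambda}_k(\pi')$ (for the discrete partitions on $I\subseteq J$) as the full subgraph of partitions supported on $I$. Any two partitions so supported can be linked by a sequence of single-particle moves, so this subgraph is connected and Lemma \ref{lem:subgraph retract} applies; item (3) is the same argument for the indiscrete partition. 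For (4), $f_k$ identifies the sink stabilization at $s$ with the shift incrementing the $[s]$-entry of a partition, whose image is the full subgraph of partitions $(q_{[i]})$ with $q_{[s]}\geq 1$---again connected. For (5), with $\pi$ the indiscrete partition on $I$, both $\graphfont{\Lambda}_k(\pi)$ and $\graphfont{\Lambda}_{k+1}(\pi)$ reduce to the graph with two vertices and $|I|$ parallel edges, and the shift between them is a graph isomorphism.

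The main obstacle is (6), since $B_k(\stargraph{I})$ has no sinks and Proposition \ref{prop:deformation retract} does not apply at $v_0$ for $k\geq 3$. The plan is to subdivide each edge of $\stargraph{I}$ by inserting a bivalent vertex adjacent to $v_0$, producing a graph $\stargraph{I}'$ homeomorphic to $\stargraph{I}$ in which the non-sink neighbors of $v_0$ are bivalent. Proposition \ref{prop:deformation retract} now applies both to $B_k(\stargraph{I}')=B_k(\stargraph{I})$ and, with leaves declared sinks, to $B_k(\stargraph{I}',\partial)=B_k(\widetilde{\graphfont{S}}_I)$, producing deformation retracts compatible with the reduction. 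Each resulting $(-)_{v_0}$ subspace stratifies over partitions of $k$ and $k-1$, with each stratum being a product of configuration spaces of intervals and hence contractible whether or not collisions at the leaf endpoints are permitted; a further collapse onto the common $1$-skeleton $\graphfont{\Lambda}_k(\pi)$ shows that the restricted reduction, and hence the original, is a homotopy equivalence. The delicate point here will be verifying that the stratifications on the two sides agree cell-by-cell and that the collapse can be performed compatibly with reduction.

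Claim (1) is then formal: naturality of the reduction produces a commutative square with vertical maps equivalences by (6) and bottom map admitting a retraction by (2); composing with a homotopy inverse to the left-hand reduction yields the required retraction of $B_k(\stargraph{I})\to B_k(\stargraph{J})$ up to homotopy.
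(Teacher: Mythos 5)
Your proposal follows the paper's proof essentially step for step: items (2)--(5) are identified via $f_k$ with inclusions of connected subgraphs (or, for (5), an isomorphism of graphs) and handled by Lemma \ref{lem:subgraph retract}; item (1) is deduced formally from (2) and (6); and item (6) is proved by subdividing $\stargraph{I}$ and comparing the $(-)_{v_0}$ subspaces with $\graphfont{\Lambda}_k(\pi_\delta)$. The one ``delicate point'' you flag in (6) is resolved in the paper not by a cell-by-cell stratification (whose strata are not open, so gluing contractible pieces directly is awkward) but by covering $\graphfont{\Lambda}_k(\pi_\delta)$ by its open edges and open stars of vertices and applying a gluing theorem for weak equivalences \cite[16.24]{Gray:HT}, the local input being exactly the contractibility of unordered configuration spaces of intervals that you invoke.
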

\begin{proof}
The second and sixth claim imply the first. By inspection, through Propositions \ref{prop:deformation retract} and \ref{prop:local graph}, the second, third, and fourth maps are equivalent (up to homotopy) to the inclusions of connected subgraphs, to which Lemma \ref{lem:subgraph retract} applies, and the fifth is equivalent to an isomorphism of graphs. For the sixth, write $\stargraph{I}'$ for the graph obtained by subdividing each edge of $\stargraph{I}$ once, and consider the following diagram:
\[\xymatrix{
B_k(\stargraph{I}')_{v_0}\ar@{-->}[d]_-g\ar[r]^-{\subseteq}&B_k(\stargraph{I}')\ar[r]^-\cong\ar[d]&B_k(\stargraph{I})\ar[d]\\
\graphfont{\Lambda}_k(\pi_\delta)\ar[r]^-{f_k}&B_k(\widetilde{\graphfont{S}}_I)\ar@{=}[r]&B_k(\widetilde{\graphfont{S}}_I),\\
}\] where $\pi_\delta$ denotes the discrete partition of $I$, the map $g$ is determined by requiring that the lefthand subdiagram commute, the top righthand arrow is induced by the canonical homeomorphism, and the middle arrow is induced by the map that collapses each leaf of $\stargraph{I}'$ to a point. The righthand subdiagram commutes up to homotopy by Proposition \ref{prop:functoriality}(2), so it suffices by Propositions \ref{prop:deformation retract} and \ref{prop:local graph} to show that $g$ is a weak equivalence. For this purpose, we apply \cite[16.24]{Gray:HT} to the open cover of $\graphfont{\Lambda}_k(\pi_\delta)$ given by the collection of open edges and open stars of vertices, recalling the elementary fact that the unordered configuration spaces of an interval are contractible.
\end{proof}

\section{Motion planning in aspherical spaces}

Farber's topological complexity is a numerical measure of the difficulty of the motion planning problem in a given background space \cite{Farber:TCMP}. This invariant belongs to an infinite family of invariants, denoted $\mathrm{TC}_r$ for $r>0$, which pertain to sequential motion planning \cite{Rudyak:HATC}. For definitions and further elaboration, the reader is invited to consult the references, as our interaction with these concepts will be mediated exclusively through a result constraining their behavior in the aspherical context. 

In order to formulate this group theoretic result, we introduce the following piece of group theoretic language.

\begin{definition}\label{def:disjoint conjugates}
Let $G$ be a group and $H_0,H_1\leq G$ subgroups. We say that $H_0$ and $H_1$ have \emph{disjoint conjugates} if $ghg^{-1}\in H_1$ for $h\in H_0$ and $g\in G$ if and only if $h=1$. We say that two homomorphisms with a common target have disjoint conjugates if their images do so.
\end{definition}

Equivalently, two subgroups have disjoint conjugates if every conjugate of the first has trivial intersection with the second. It is not hard to see that the two subgroups play symmetric roles in this definition.

The relevance of these ideas to our purpose lies in the following result.

\begin{theorem}[{\cite[Thm. 2.1]{FarberOprea:HTCAS}}]\label{thm:lower bound}
Let $G$ be a group, and fix $r\geq 2$. If the subgroups $H_0,H_1\leq G$ have disjoint conjugates, then \[\mathrm{TC}_r(BG)\geq \mathrm{cd}(H_0\times H_1\times G^{r-2}).\]
\end{theorem}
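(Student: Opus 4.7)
The plan is to realize $\mathrm{TC}_r(BG)$ as the sectional category of the $r$-fold diagonal $\Delta_r \colon BG \to BG^r$, or equivalently of the Borel-type bundle $p \colon EG^r \times_{G^r} \left(G^r/\Delta_r(G)\right) \to BG^r$ whose fiber is $G^r/\Delta_r(G)$, and then to pull $p$ back along the classifying map $BA \to BG^r$ of a cleverly chosen subgroup $A \leq G^r$. Since sectional category is monotone under pullback, one obtains $\mathrm{TC}_r(BG) \geq \mathrm{secat}(p_A)$, where $p_A \colon EA \times_A (G^r/\Delta_r(G)) \to BA$.

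The key algebraic input is the standard criterion that the left action of $A$ on $G^r/\Delta_r(G)$ is free precisely when $A \cap g\Delta_r(G) g^{-1} = \{1\}$ for every $g \in G^r$. Under this hypothesis $p_A$ admits no homotopy section, from which one deduces $\mathrm{secat}(p_A) \geq \mathrm{cat}(BA)$; combined with the Eilenberg--Ganea identity $\mathrm{cat}(BA) = \mathrm{cd}(A)$, this yields a cohomological-dimension lower bound. This general machinery is what I would borrow directly from the aspherical lower-bound framework of Farber--Oprea and Grant--Lupton--Oprea.

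With this tool in hand, the concrete claim reduces to verifying the trivial-intersection condition for $A := H_0 \times H_1 \times G^{r-2}$. Suppose
\[
(h_0, h_1, k_2, \dots, k_{r-1}) = (g_0 x g_0^{-1},\, g_1 x g_1^{-1},\, \dots,\, g_{r-1} x g_{r-1}^{-1})
\]
with $h_i \in H_i$, $k_j \in G$, and $x, g_i \in G$. Comparing the first two coordinates gives $(g_0 g_1^{-1}) h_1 (g_0 g_1^{-1})^{-1} = h_0 \in H_0$, so the disjoint-conjugates hypothesis forces $h_1 = 1$, hence $x = 1$, hence the entire tuple is trivial. Applying the lower bound with this $A$ yields $\mathrm{TC}_r(BG) \geq \mathrm{cd}(A)$, as desired.

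The main obstacle in this approach is not the concrete disjoint-conjugates verification, which is one line, but the general sectional-category lower bound linking freeness of the $A$-action on $G^r/\Delta_r(G)$ to $\mathrm{cd}(A)$. This requires a delicate obstruction-theoretic argument: a section of $p_A$ would produce a map $BA \to EA \times_A (G^r/\Delta_r(G))$ splitting the projection, and one rules this out using asphericity and dimension counts at the level of the Serre spectral sequence. Since this step is standard in the literature, I would cite it rather than reprove it and concentrate the exposition on the elementary verification above.
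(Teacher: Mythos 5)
The paper offers no proof of this statement---it is imported verbatim from Farber--Oprea---so there is no internal argument to compare against, but your sketch is essentially the proof given in that reference: identify the $r$-th sectional-category fibration over $BG^r$ with the bundle with discrete fiber $G^r/\Delta_r(G)$, pull back along $BA\to BG^r$ for $A=H_0\times H_1\times G^{r-2}$, note that freeness of the $A$-action makes the total space homotopy discrete, and conclude $\mathrm{secat}(p_A)\geq \mathrm{cat}(BA)=\mathrm{cd}(A)$ by Eilenberg--Ganea; your one-line verification of the trivial-intersection condition from the disjoint-conjugates hypothesis is correct (it uses the symmetry of that hypothesis, which the paper explicitly records). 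The only imprecision is in your closing paragraph: the step $\mathrm{secat}(p_A)\geq\mathrm{cat}(BA)$ is not a Serre-spectral-sequence obstruction to a \emph{global} section, but the elementary observation that a local section over an open $U$ factors the inclusion $U\hookrightarrow BA$ through a homotopy-discrete space, so each component of $U$ is categorical in $BA$. Since you defer that step to the literature anyway, this affects only the description, not the logic.
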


We close by recording a few simple techniques for working with this concept.

\begin{lemma}\label{lem:disjointness homomorphism}
Let $\psi:G\to K$ be a group homomorphism and $H_0,H_1\leq G$ subgroups.
\begin{enumerate}
\item Suppose that $\psi|_{H_0}$ is injective. If $\psi(H_0)$ and $\psi(H_1)$ have disjoint conjugates, then so do $H_0$ and $H_1$.
\item Suppose that $\psi$ admits a retraction. If $H_0$ and $H_1$ have disjoint conjugates, then so do $\psi(H_0)$ and $\psi(H_1)$.
\end{enumerate}
\end{lemma}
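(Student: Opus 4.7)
For part (1), the plan is to transport the conjugation relation through $\psi$ and exploit injectivity on $H_0$. Starting from an element $h\in H_0$ and $g\in G$ with $ghg^{-1}\in H_1$, I would apply $\psi$ to obtain $\psi(g)\psi(h)\psi(g)^{-1}\in \psi(H_1)$, with $\psi(h)\in \psi(H_0)$. The assumed disjointness of conjugates for the images then forces $\psi(h)=1$, and injectivity of $\psi|_{H_0}$ yields $h=1$, as required by Definition \ref{def:disjoint conjugates}.

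For part (2), the plan is symmetric: use a retraction $\rho\colon K\to G$ with $\rho\circ\psi=\id_G$ to pull conjugation relations in $K$ back into $G$. Given $h\in H_0$ and $k\in K$ with $k\psi(h)k^{-1}\in \psi(H_1)$, applying $\rho$ gives $\rho(k)\,h\,\rho(k)^{-1}\in \rho(\psi(H_1))=H_1$, since $\rho\psi=\id$. Disjointness of conjugates for $H_0,H_1$ now forces $h=1$, and hence $\psi(h)=1$, establishing disjoint conjugates for the images.

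Both parts are essentially symbol-pushing from the definition, and I expect no genuine obstacle: the only mild subtlety is keeping straight the direction of the retraction in (2) (it is a splitting $\rho\colon K\to G$ of $\psi$, not of its image), and noting that in (1) one needs injectivity only on $H_0$ rather than on all of $G$, since the offending element $h$ lives there.
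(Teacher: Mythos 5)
Your proposal is correct and follows essentially the same argument as the paper: in (1) push the conjugation relation forward along $\psi$ and use injectivity on $H_0$ to recover $h=1$; in (2) pull the relation back along the retraction $\rho$ and use $\rho\circ\psi=\id_G$ to land in $H_1$. There is nothing to add.
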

\begin{proof}
For the first claim, suppose that $ghg^{-1}\in H_1$ for $h\in H_0$ and $g\in G$. Since $\psi$ is a homomorphism, it follows that $\psi(g)\psi(h)\psi(g)^{-1}\in \psi(H_1)$. Since $\psi(h)\in \psi(H_0)$, our disjointness assumption implies that $\psi(g)\psi(h)\psi(g)^{-1}=1$, whence $\psi(h)=1$. The claim now follows from our injectivity assumption.

For the second claim, let $\rho:K\to G$ be a homomorphism with $\rho\circ \psi=\id_G$, and suppose that $k\psi(h)k^{-1}\in \psi(H_1)$ for $h\in H_0$ and $k\in K$. From our assumptions on $\rho$, it follows that $\rho(k)h\rho(k)^{-1}\in H_1$. Our disjointness assumption implies that $h=1$, whence $\psi(h)=1$.
\end{proof}

\begin{corollary}\label{cor:kernel}
Let $G$ be a group and $H_0,H_1\leq G$ subgroups. If there is a homomorphism $\psi:G\to K$ with $\psi|_{H_0}$ injective and $\psi|_{H_1}$ trivial, then $H_0$ and $H_1$ have disjoint conjugates.
\end{corollary}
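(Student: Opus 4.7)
The plan is to deduce this immediately from Lemma \ref{lem:disjointness homomorphism}(1), applied to the given homomorphism $\psi:G\to K$. The key observation is that the trivial subgroup $\{1\}\leq K$ automatically has disjoint conjugates from any subgroup $H\leq K$: if $khk^{-1}\in\{1\}$ for $h\in H$ and $k\in K$, then $h=1$ directly. Since $\psi|_{H_1}$ is trivial, we have $\psi(H_1)=\{1\}$, so $\psi(H_0)$ and $\psi(H_1)$ have disjoint conjugates in $K$. Combined with the injectivity hypothesis on $\psi|_{H_0}$, Lemma \ref{lem:disjointness homomorphism}(1) then yields the conclusion.

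Equivalently, one can give a one-line direct proof bypassing the lemma: suppose $ghg^{-1}\in H_1$ for some $h\in H_0$ and $g\in G$. Applying $\psi$ and using that $\psi(H_1)=\{1\}$, we find $\psi(g)\psi(h)\psi(g)^{-1}=1$, so $\psi(h)=1$, and injectivity of $\psi|_{H_0}$ forces $h=1$. Either route is essentially immediate, so there is no real obstacle; the content of the corollary is the packaging of a useful sufficient condition for disjoint conjugates that will be applied later, when $\psi$ arises geometrically from a quotient onto a local graph (Construction \ref{construction:local quotient}) that kills one of the subgroups.
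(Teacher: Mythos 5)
Your proposal is correct and follows exactly the paper's route: observe that $\psi(H_1)=\{1\}$ trivially has disjoint conjugates from $\psi(H_0)$, then apply Lemma \ref{lem:disjointness homomorphism}(1) using the injectivity of $\psi|_{H_0}$. Your additional direct one-line verification is just an unwinding of the same argument, so there is nothing to add.
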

\begin{proof}
Our triviality assumption implies that $\psi(H_1)$ is trivial, so $\psi(H_0)$ and $\psi(H_1)$ have disjoint conjugates, and Lemma \ref{lem:disjointness homomorphism}(1) yields the result.
\end{proof}

\section{Local calculation}

This section is dedicated to the computations that will, after some elaboration, imply the main theorems. We begin with the well known observation that, within the configuration space $B_2(\stargraph{3})$, there is the loop given by the threefold concatenated path \[\epsilon=\{\underline v_1, e_{23}\}\star \{e_{12}, \underline v_3\}\star \{\underline v_2, e_{31}\},\] where $v_i$ denotes the $i$th vertex, an underscore indicates a constant path, and $e_{ij}:[0,1]\to \stargraph{3}$ is the unique piecewise linear path from $v_i$ to $v_j$ with $e_{ij}(\frac{1}{2})=v_0$, the essential vertex. Note that $\epsilon$ is a loop based at the configuration $\{v_1, v_2\}$ (here and above we use braces to indicate unordered configurations).

It is a standard fact that $\epsilon:S^1\to B_2(\stargraph{3})$ is a homotopy equivalence. More precisely, we have the following (see \cite[Lem. 4.2]{Knudsen:TCPGBGSM}, for example).

\begin{lemma}\label{lem:local homeomorphism}
The map $\epsilon$ factors through an embedding $S^1\to B_2(\stargraph{3})_{v_0}$ as a deformation retract.
\end{lemma}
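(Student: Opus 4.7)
The plan is to identify $B_2(\stargraph{3})_{v_0}$ as an explicit one-complex and then recognize $\epsilon$ as traversing its compact spine.

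First, I would verify that $\epsilon$ actually factors through $B_2(\stargraph{3})_{v_0}$: in each of the three sub-paths $\{\underline{v_i}, e_{jk}\}$, one particle is pinned at the leaf $v_i$ (which lies outside the open star of $v_0$), while the other moves along the edges meeting $v_0$, so at any moment at most one particle occupies $\Star(v_0)$.

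Second, I would decompose $B_2(\stargraph{3})_{v_0} = A_1 \cup A_2 \cup A_3$, where $A_i$ is the set of configurations containing the leaf $v_i$. The evaluation map $\{v_i, y\} \mapsto y$ identifies $A_i$ with $\stargraph{3} \setminus \{v_i\}$, a tripod based at the configuration $\{v_i, v_0\}$ with two closed arms terminating at $\{v_i, v_j\}$ and $\{v_i, v_k\}$ (for $\{i,j,k\} = \{1,2,3\}$) and one half-open arm running toward the forbidden collision at $v_i$. The pairwise intersection $A_i \cap A_j$ is the singleton $\{\{v_i, v_j\}\}$, and the triple intersection is empty; gluing accordingly yields a hexagon (built from the six closed arms, with vertices $\{v_i, v_0\}$ and $\{v_i, v_j\}$) together with three half-open arcs dangling from the points $\{v_i, v_0\}$.

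Third, I would observe that the image $T$ of $\epsilon$ is precisely this hexagon: the sub-path $\{\underline{v_i}, e_{jk}\}$ traces, within $A_i$, the two closed arms from $\{v_i, v_j\}$ through the center $\{v_i, v_0\}$ to $\{v_i, v_k\}$, and the three sub-paths concatenate to traverse all six closed arms exactly once. After reparametrization, $\epsilon$ becomes a homeomorphism $S^1 \xrightarrow{\cong} T$. Finally, I would exhibit $T$ as a deformation retract of $B_2(\stargraph{3})_{v_0}$ by sliding each half-open arm linearly onto its basepoint $\{v_i, v_0\}$ while fixing $T$; continuity follows from the pasting lemma, since the three half-open arms are pairwise disjoint and meet $T$ only at their respective basepoints.

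The main (and mild) obstacle is purely bookkeeping: tracking the six closed arms and three dangling open arcs across the three overlapping tripods without introducing ambiguities at the gluings. No deeper difficulty arises; everything reduces to direct inspection, as the result has the same flavor as the cited \cite[Lem.~4.2]{Knudsen:TCPGBGSM}.
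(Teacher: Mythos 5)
Your argument is correct: the decomposition of $B_2(\stargraph{3})_{v_0}$ into the three tripods $A_i\cong\stargraph{3}\setminus\{v_i\}$ glued along the singletons $\{v_i,v_j\}$, the identification of the image of $\epsilon$ with the resulting hexagon, and the retraction of the three half-open arms onto the points $\{v_i,v_0\}$ all check out. The paper itself gives no proof of this lemma, deferring to \cite[Lem.~4.2]{Knudsen:TCPGBGSM}, and your direct verification is exactly the standard argument that reference carries out.
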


Given $n\geq3$ and $0\leq a\leq n-3$, there is a unique piecewise linear embedding $\stargraph{3}\to \stargraph{n}$ sending the $i$th edge of the source to the $(i+a)$th edge of the target. We write $\iota_a:B_2(\stargraph{3})\to B_2(\stargraph{n})$ for the induced map on configuration spaces. In the same way, we obtain maps $\tilde\iota_a:B_2(\stargraph{3}/\partial)\to B_2(\stargraph{n}/\partial)$.

\begin{lemma}\label{lem:local calculation}
Fix $n\geq 3$ and $k\geq2$. For distinct $0\leq a\leq n-3$, the composite homomorphisms \[\pi_1(B_2(\stargraph{3}))\xrightarrow{(\iota_a)_*}\pi_1(B_2(\stargraph{n}))\to \pi_1(B_2(\stargraph{n}/\partial))\to \pi_1(B_k(\stargraph{n}/\partial))\] have disjoint conjugates, where the unmarked arrows are induced by the quotient map and sink stabilization, respectively. Moreover, each of these composites is injective.
\end{lemma}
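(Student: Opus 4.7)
The plan is to model $\pi_1(B_k(\stargraph{n}/\partial))$ explicitly as a free group via the graph of Proposition~\ref{prop:local graph}, compute the image of $[\epsilon]$ as an explicit word, and conclude via standard combinatorial group theory. Corollary~\ref{cor:homotopy retracts}(5) reduces matters to $k=2$, since iterated sink stabilization is a homotopy equivalence. Naturality of quotients (Example~\ref{example:quotient}) factors the composite as
\[\pi_1(B_2(\stargraph{3})) \xrightarrow{q_*} \pi_1(B_2(\stargraph{3}/\partial)) \xrightarrow{(\tilde\iota_a)_*} \pi_1(B_2(\stargraph{n}/\partial)),\]
where $\tilde\iota_a$ is the inclusion sending the three edges of $\stargraph{3}/\partial$ to those of $\stargraph{n}/\partial$ indexed by $a+1,\,a+2,\,a+3$. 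By Proposition~\ref{prop:local graph}, $B_2(\stargraph{n}/\partial)$ is homotopy equivalent to the graph with two vertices joined by $n$ parallel edges $\tilde e_1,\dots,\tilde e_n$; choosing $\tilde e_1$ as spanning tree identifies its fundamental group with $F_{n-1}$ on generators $\gamma_i = [\tilde e_i \tilde e_1^{-1}]$ for $i \geq 2$, under the convention $\gamma_1 = 1$.

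Tracing the three segments of $\epsilon$ through these identifications --- each segment moves one particle across the essential vertex $v_0$ along two edges of $\stargraph{3}$ --- the corresponding segment of $\iota_a(\epsilon)$ becomes, in the target graph, a traversal of one of $\tilde e_{a+1},\tilde e_{a+2},\tilde e_{a+3}$ followed by the reverse of another, contributing a factor $\gamma_{i_1}\gamma_{i_2}^{-1}$ in $F_{n-1}$. Concatenating, the image of the generator $[\epsilon]$ of $\pi_1(B_2(\stargraph{3})) \cong \mathbb{Z}$ (Lemma~\ref{lem:local homeomorphism}) is
\[w_a \;:=\; \gamma_{a+2}\gamma_{a+3}^{-1}\,\gamma_{a+1}\gamma_{a+2}^{-1}\,\gamma_{a+3}\gamma_{a+1}^{-1} \;\in\; F_{n-1}.\]
This word is cyclically reduced and nontrivial: for $a \geq 1$ all three letters are independent generators and $|w_a| = 6$, while for $a = 0$ the substitution $\gamma_1 = 1$ collapses $w_0$ to the nontrivial commutator $[\gamma_2, \gamma_3^{-1}]$ of length $4$. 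Injectivity of the composite is immediate. For disjoint conjugates, every nonzero power $w_a^k$ remains cyclically reduced (adjacent factors involve distinct edge indices, so no cancellation occurs at the juncture) and has support $\{a+1, a+2, a+3\} \setminus \{1\}$. For $a \neq a'$ in $\{0, \dots, n-3\}$, these supports are distinct consecutive subsets of $\{2, \dots, n\}$, so for $k, l \neq 0$ the cyclically reduced words $w_a^k$ and $w_{a'}^l$ have different supports. Since two cyclically reduced elements of a free group are conjugate only if they are cyclic permutations of one another (and hence share support), no such pair can be conjugate, yielding the disjoint-conjugates condition on the cyclic subgroups generated.

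The main obstacle will be carrying out the computation of $w_a$ carefully --- correctly tracking basepoints, edge orientations, and the spanning-tree substitution through Proposition~\ref{prop:local graph} --- and treating the degenerate case $a = 0$, where $w_0$ partially collapses into a length-$4$ commutator; its support $\{2, 3\}$ nonetheless remains distinct from every other support (for $a' \geq 1$, some index $\geq 4$ necessarily appears), so the disjointness conclusion goes through unchanged.
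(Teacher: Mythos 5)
Your proposal is correct, and its first half --- reducing to $k=2$ via Corollary~\ref{cor:homotopy retracts}(5), passing to the graph model of Proposition~\ref{prop:local graph} to identify $\pi_1(B_2(\stargraph{n}/\partial))$ with a free group, and computing the image of $[\epsilon]$ as an explicit word --- runs parallel to the paper. Where you genuinely diverge is the disjointness endgame. The paper first reduces to $n=4$ and $a\in\{0,1\}$ (by symmetry plus the retractions of Corollary~\ref{cor:homotopy retracts}(3) and Lemma~\ref{lem:disjointness homomorphism}(2)), identifies the two images as the cyclic subgroups generated by the commutators $[[\gamma_1],[\gamma_2]]$ and $[[\gamma_2],[\gamma_3]]$, and then applies Corollary~\ref{cor:kernel} to the homomorphism of free groups killing $[\gamma_3]$: this is injective on the first subgroup and trivial on the second, so no word combinatorics is needed. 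You instead keep all values of $a$ in play, observe that each $w_a^k$ ($k\neq 0$) is cyclically reduced with support $\{a+1,a+2,a+3\}\setminus\{1\}$, and invoke the standard fact that conjugate cyclically reduced words are cyclic rotations of one another, hence share support; since the supports are pairwise distinct, no nontrivial conjugacy can occur. Your route costs you a careful word computation (which you carry out correctly, including the degenerate collapse of $w_0$ to a length-$4$ commutator) and an appeal to the conjugacy criterion for free groups, but it buys a uniform treatment of all pairs $a\neq a'$ without the reduction to $n=4$ --- a reduction whose ``by symmetry'' step in the paper is cleanest only when the two triples of edges overlap maximally. Both arguments are complete and correct; yours is more computational, the paper's more structural.
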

\begin{proof}
We treat in detail only the case $a\in\{0,1\}$, which is the case we use below. Modifying the proof in the remaining cases is straightforward and left to the reader.

By Corollary \ref{cor:homotopy retracts}(5), we may take $k=2$. In light of the commutative diagram
\[\xymatrix{
B_2(\stargraph{3})\ar[dr]^-{\iota_a}\ar[ddd]\ar[rr]^-{\iota_a}&&B_2(\stargraph{n})\ar[ddd]\\
&B_2(\stargraph{4})\ar[d]\ar[ur]\\
&B_2(\stargraph{4}/\partial)\ar[dr]\\
B_2(\stargraph{3}/\partial)\ar[ur]^-{\tilde\iota_a}\ar[rr]^-{\tilde\iota_a}&&B_2(\stargraph{n}/\partial),
}\] together with Corollary \ref{cor:homotopy retracts}(3) and Lemma \ref{lem:disjointness homomorphism}(2), we may take $n=4$. Finally, from the same diagram, it suffices to show that the composite homomorphisms \[\pi_1(B_2(\stargraph{3}))\xrightarrow{}\pi_1(B_2(\stargraph{3}/\partial))\xrightarrow{(\tilde\iota_a)_*}\pi_1(B_2(\stargraph{4}/\partial))\] have disjoint conjugates for $a\in\{0,1\}$.

Denoting by $\gamma_{i}$ a piecewise linear loop in $B_2(\stargraph{n}/\partial)$ in which a single particle moves from the sink vertex to the essential vertex along the $i$th edge and back along the $(i+1)$st, it follows from Proposition \ref{prop:local graph} and direct calculation that $\pi_1(B_2(\stargraph{n}/\partial))$ is freely generated by $\{[\gamma_{1}],\ldots,[\gamma_{n-1}]\}$, and that the composite in question is the inclusion of the subgroup generated by the commutator $\left[[\gamma_{1}],[\gamma_{2}]\right]$ or $\left[[\gamma_{2}],[\gamma_{3}]\right]$, according to the value of $a$. Indeed, the map $B_2(\stargraph{3})\to B_2(\stargraph{3}/\partial)$ sends $\{\underline v_1, e_{23}\}$ to $\gamma_2$, and so on.

 The claim now follows by applying Corollary \ref{cor:kernel} to the homomorphism $\psi:\pi_1(B_2(\stargraph{4}/\partial))\to \pi_1(B_2(\stargraph{3}/\partial))$ determined by the equation \[
\psi\left([\gamma_i]\right)=
\begin{cases}
[\gamma_i]&\quad i\in\{1,2\}\\
1&\quad i=3
\end{cases} 
\] and the universal property of the free group.
\end{proof}

\begin{lemma}\label{lem:other local calculation}
Fix $k\geq3$ and an equivalence relation $\pi$ on $\{1,2,3\}$. If $\pi$ does not identify $1$ and $2$, then, for $a\in\{0,1\}$, the homomorphisms 
\[
\pi_1(B_2(\stargraph{3}))\xrightarrow{\sigma_{e_{a+1}}}\pi_1(B_3(\stargraph{3}))\to \pi_1(B_3(\graphfont{\Lambda}(\pi)))\to \pi_1(B_k(\graphfont{\Lambda}(\pi)))
\] have disjoint conjugates, where the unmarked arrows are induced by the quotient map and any composite of sink stabilizations, respectively. Moreover, each of these composites is injective.
\end{lemma}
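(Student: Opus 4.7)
The proof mimics that of Lemma~\ref{lem:local calculation}: we reduce to an explicit computation in a free group, then apply Corollary~\ref{cor:kernel}. First, since any composite of sink stabilizations $B_3(\graphfont{\Lambda}(\pi)) \to B_k(\graphfont{\Lambda}(\pi))$ admits a retraction up to homotopy (Corollary~\ref{cor:homotopy retracts}(4)), Lemma~\ref{lem:disjointness homomorphism} lets us replace $k$ by $3$. Next, edge stabilization commutes on the nose with the quotient $\stargraph{3} \to \graphfont{\Lambda}(\pi)$ (since the quotient restricts to a homeomorphism on each edge with matching parametrization), so by Lemma~\ref{lem:stabilization comparison} the composite in question is homotopic to
\[
B_2(\stargraph{3}) \to B_2(\graphfont{\Lambda}(\pi)) \xrightarrow{\sigma_{v_{[a+1]}}} B_3(\graphfont{\Lambda}(\pi)),
\]
with distinct target sinks for $a=0,1$ by the hypothesis $[1] \neq [2]$.

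We next reduce the case analysis on $\pi$. When $\pi$ is discrete, the coarsening map $\graphfont{\Lambda}(\pi) \to \graphfont{\Lambda}(\pi')$ with $\pi' = \{\{1,3\},\{2\}\}$ is a near monomorphism (Example~\ref{example:quotient}), so Lemma~\ref{lem:disjointness homomorphism}(1) reduces the claim to the case $\pi = \pi'$. The remaining case $\pi = \{\{1\},\{2,3\}\}$ is handled by the index-swap isomorphism $1 \leftrightarrow 2$, which identifies this graph with $\graphfont{\Lambda}(\pi')$ and interchanges the $a=0$ and $a=1$ composites; the result follows by symmetry of the disjoint conjugates relation.

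For $\pi = \pi'$, the graph $\graphfont{\Lambda}(\pi')$ has sinks $v_A = v_{[1]} = v_{[3]}$ and $v_B = v_{[2]}$, and Proposition~\ref{prop:local graph} identifies $\pi_1(B_3(\graphfont{\Lambda}(\pi')))$ with $\pi_1(\graphfont{\Lambda}_3(\pi'))$, a free group of rank three. Choosing the spanning tree consisting of all edges indexed by $1$ or $2$, the three non-tree edges---each indexed by $3$---yield free generators $y_1, y_2, y_3$. Tracing the loop $\epsilon$ of Lemma~\ref{lem:local homeomorphism} through each composite, via the homeomorphism of Proposition~\ref{prop:local graph} and with the appropriate change of basepoint between $\{v_A, v_A, v_B\}$ and $\{v_A, v_B, v_B\}$, identifies the images as the infinite cyclic subgroups $\langle y_1 y_2^{-1}\rangle$ (for $a=0$) and $\langle y_2 y_3^{-1}\rangle$ (for $a=1$); in particular, both composites are injective. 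Finally, Corollary~\ref{cor:kernel} applied to the homomorphism $\psi \colon F\langle y_1, y_2, y_3 \rangle \to \mathbb{Z}$ sending $y_1 \mapsto 0$ and $y_2, y_3 \mapsto 1$---which kills $y_2 y_3^{-1}$ and sends $y_1 y_2^{-1}$ to $-1$---yields the desired disjoint conjugates.

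The main obstacle is this final explicit calculation, which requires a careful combinatorial bookkeeping of the seven vertices and nine edges of $\graphfont{\Lambda}_3(\pi')$ along with the reconciliation of basepoints between the two sink stabilizations, in order to identify the images as the stated cyclic subgroups.
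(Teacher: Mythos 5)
Your proof is correct and follows essentially the same route as the paper's: reduce to $k=3$ via Corollary \ref{cor:homotopy retracts}(4) and Lemma \ref{lem:disjointness homomorphism}, reduce the three admissible equivalence relations to a single case by coarsening and symmetry, identify the images explicitly inside the free group $\pi_1(\graphfont{\Lambda}_3(\pi'))$, and conclude via Corollary \ref{cor:kernel}. Your extra step of trading $\sigma_{e_{a+1}}$ for the sink stabilization at $v_{[a+1]}$ (via Lemma \ref{lem:stabilization comparison}, after checking compatibility with the quotient) is a clean way to organize the ``direct calculation'' the paper leaves to the reader, and the resulting generators $y_1y_2^{-1}$ and $y_2y_3^{-1}$ check out.
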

\begin{proof}
The argument is similar to that of Lemma \ref{lem:local calculation}, albeit simpler, so we will leave some details to the reader. By Corollary \ref{cor:homotopy retracts}(4) and Lemma \ref{lem:disjointness homomorphism}(2), we may take $k=3$. By symmetry, there are two cases, namely that of the discrete equivalence relation and that of the equivalence relation $\pi$ identifying $2$ and $3$, and the former follows from the latter by means of Lemma \ref{lem:disjointness homomorphism}(1). In the latter case, it follows from Proposition \ref{prop:local graph} and direct calculation that $\pi_1(B_3(\graphfont{\Lambda}(\pi)))$ is free on three generators, and that the composite in question is the inclusion of the subgroup generated by the product of the first and third or the second and third, according to the value of $a$. The conclusion follows as before by applying Corollary \ref{cor:kernel} to an appropriate homomorphism of free groups.
\end{proof}

\section{Toric and detection homomorphisms}

In this section, we define homomorphisms into and out of the fundamental group of $B_k(\graf)$. Understanding the interactions among these homomorphisms will be the key step in the proof of the main results. 

\begin{remark}
The ideas here are very closely related to those developed in \cite[\S 5]{Knudsen:TCPGBGSM} in the setting of ordered configuration spaces. We direct the reader to \cite[\S 4.3]{Knudsen:FCB} for a side-by-side comparison within a common framework.
\end{remark}


\begin{definition}
Fix a graph $\graf$ and $k\geq0$. We define the \emph{detection} homomorphism \[\delta: \pi_1(B_k(\graf))\to \bigsqcap_v\pi_1(B_k(\graphfont{\Lambda}(v))\] to be the homomorphism with $v$th component $(q_v)_*$, where the product is taken over the set of essential vertices of $\graf$.
\end{definition}

Note that, by Proposition \ref{prop:local graph}, the target of the detection homomorphism is a product of free groups.

Next, we will define two ``toric'' homomorphisms \emph{into} the fundamental group. We begin by subdividing the (connected) graph $\graf$ to guarantee that $\graf$ has no self-loops or multiple edges, that every vertex adjacent to an essential vertex has valence $2$, and that distinct essential vertices have disjoint closed stars. We fix a parametrization of each edge of $\graf$ and an ordering of the set of edges at each essential vertex. If the vertex is separating, we require the first two edges in the ordering to lie in different components of the complement. We work with a basepoint $x_0\in\Conf_k(\graf)$ chosen so that every coordinate is a bivalent vertex, which is always achievable after subdivision.

Next, we fix a set $W_0$ of vertices of valence at least $4$, a set $W_1$ of separating trivalent vertices, and a set $W_2$ of non-separating trivalent vertices, and write $W=W_0\sqcup W_1\sqcup W_2$. For $v\in W$, define \[k(v)=\begin{cases}
3&\quad v\in W_1\\
2&\quad \text{otherwise}
\end{cases}\] and set $k(W)=\sum_{v\in W}k(v)$.

\begin{construction}
For $v\in W$ and $a\in\{0,1\}$, define $\varphi^a_v:S^1\to B_{k(v)}(\stargraph{d(v)})$ as follows.
\begin{enumerate}
\item If $v\in W_0$, then $\varphi^a_v$ is the composite $S^1\xrightarrow{\epsilon} B_2(\stargraph{3})\xrightarrow{\iota_a} B_2(\stargraph{d(v)})$.
\item If $v\in W_1$, then $\varphi^a_v$ is the composite $S^1\xrightarrow{\epsilon} B_2(\stargraph{3})\xrightarrow{\sigma_{e_{a+1}}}B_3(\stargraph{3})$.
\item If $v\in W_2$, then $\varphi_v^0=\epsilon$ and $\varphi_v^1$ is the constant map with value the configuration $\{v_1,v_2\}\in B_2(\stargraph{3})$.
\end{enumerate}
\end{construction}

For each vertex $v\in W$, there is a unique piecewise linear cellular embedding $\stargraph{d(v)}\to \graf$ preserving the ordering of edges, and these embeddings give rise to an embedding from a $W$-indexed disjoint union of star graphs (we use our assumption on the subdivision of $\graf$). For $a\in\{0,1\}$, we obtain the composite embedding
{\small\[\varphi_W^a:(S^1)^W\xrightarrow{(\varphi_v^a)_{v\in W}} \bigsqcap_{v\in W}B_{k(v)}\left(\stargraph{d(v)}\right)\to B_{k(W)}\left(\bigsqcup_{v\in W}\stargraph{d(v)}\right)\to B_{k(W)}(\graf),\]}where the second map is the homeomorphism onto the appropriate connected component.

Finally, we choose a path $\alpha^a_W$ in $B_{k(W)}(\graf)$ from $\varphi_W^a(1)$ to $x_0$ for each $a\in\{0,1\}$. We require these paths to be concatenations of coordinatewise linear edge paths in $\graf$ (after choosing parametrizations) with all but one particle stationary. Such a path exists by our assumption that $\graf$ is connected and $m(\graf)>0$ (in particular, $B_{k(W)}(\graf)$ is connected).

\begin{definition}
Let $W$ be a set of essential vertices. For $a\in\{0,1\}$, we define the \emph{toric} homomorphism $\tau^a_W$ to be the composite \[\mathbb{Z}^W\cong  \pi_1(S^1,1)^W\xrightarrow{(\varphi^a_W)_*}\pi_1(B_k(\graf), \varphi_W^a(1))\xrightarrow{\hat \alpha_W^a}\pi_1(B_k(\graf),x_0).\]
\end{definition}

\begin{remark}
We now explain how these homomorphisms depend on the choices made in their construction (none of which are reflected in our notation). Changing the subdivision of $\graf$ alters $\varphi_v^a$ by a homotopy, so the final homomorphism is unchanged. Changing the ordering of the edges at an essential vertex results in a genuinely different homomorphism, since it amounts to a change of basis in the free group $\pi_1(\stargraph{d(v)})$. Finally, the choice of basepoint and path affect the definition only up to conjugation in the target.
\end{remark}

\section{The main result}

Throughout this section, we retain the notation introduced above. The key result concerning the interaction of the toric and detection homomorphisms is the following.

\begin{proposition}\label{prop:detection}
Let $W$ be a set of essential vertices as above.
\begin{enumerate}
\item The composite $\delta\circ \tau_W^0$ is injective.
\item The restrictions of $\delta\circ\tau_W^1$ to $\mathbb{Z}^{W_0\sqcup W_1}$ and $\mathbb{Z}^{W_2}$ are injective and trivial, respectively.
\item For $a\in\{0,1\}$, the homomorphisms $\delta\circ\tau_W^a$ have disjoint conjugates.
\end{enumerate}
\end{proposition}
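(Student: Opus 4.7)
The plan is to exploit the way $\delta$ diagonalizes the toric homomorphism. First, I will observe that for any essential vertex $v$ of $\graf$ and any $w\in W$ with $w\neq v$, the $v$-component of $\delta\circ\tau_W^a$ vanishes on the factor $\mathbb{Z}^{\{w\}}\subseteq\mathbb{Z}^W$. Granting this, the image $\delta\circ\tau_W^a(\mathbb{Z}^W)$ is the direct product of its diagonal pieces $(\delta_v\circ\tau_W^a)(\mathbb{Z}^{\{v\}})$ for $v\in W$ (with trivial components at essential vertices outside $W$), so injectivity, triviality, and disjoint conjugates in the ambient direct product of free groups can all be checked factor by factor. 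The three parts thereby reduce to statements about the diagonal restrictions $(\delta_v\circ\tau_W^a)|_{\mathbb{Z}^{\{v\}}}$ for $v\in W$.

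The off-diagonal vanishing is geometric: by the subdivision assumption on $\graf$, the closed stars of distinct essential vertices are pairwise disjoint, so $\varphi_w^a$ is supported in a single connected component of $\graf\setminus\{v\}$. The quotient $q_v$ collapses that component to the single sink $v_{[w]}$ of $\graphfont{\Lambda}(v)$, making $q_v\circ\varphi_w^a$ a constant loop; the corresponding element of $\pi_1(B_k(\graphfont{\Lambda}(v)))$ is then trivial after the basepoint correction $\hat\alpha_W^a$.

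For the diagonal pieces, the plan is to recognize each $(\delta_v\circ\tau_W^a)|_{\mathbb{Z}^{\{v\}}}$, up to conjugation, as a composite to which Lemma~\ref{lem:local calculation} or Lemma~\ref{lem:other local calculation} applies, and to transport the conclusion through Lemma~\ref{lem:disjointness homomorphism}(1) where needed. For $v\in W_0$, further quotienting $\graphfont{\Lambda}(v)$ to $\stargraph{d(v)}/\partial$ by identifying all sinks puts us in the setting of Lemma~\ref{lem:local calculation}, whose conclusions for both $a\in\{0,1\}$ are available since $d(v)\geq 4$. For $v\in W_1$, our convention that the first two edges at a separating trivalent vertex lie in distinct components of $\graf\setminus\{v\}$ ensures that $\pi_v$ does not identify them, so Lemma~\ref{lem:other local calculation} applies directly and supplies both injectivity and disjoint conjugates for $a\in\{0,1\}$. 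For $v\in W_2$ the relation $\pi_v$ is indiscrete and $\graphfont{\Lambda}(v)=\stargraph{3}/\partial$; injectivity for $a=0$ is Lemma~\ref{lem:local calculation} with $n=3$, while $\varphi_v^1$ is constant by construction, so the diagonal piece vanishes for $a=1$ and disjointness against it is automatic.

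The main technical hurdle I foresee is the basepoint bookkeeping required to identify each diagonal piece with an honest conjugate of the corresponding local composite, especially for separating $v\in W_0$, where one must also check that the further quotient $\pi_1(B_k(\graphfont{\Lambda}(v)))\to\pi_1(B_k(\stargraph{d(v)}/\partial))$ remains injective on the cyclic subgroup in question in order to invoke Lemma~\ref{lem:disjointness homomorphism}(1).
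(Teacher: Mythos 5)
Your proposal is correct and follows essentially the same route as the paper: the off-diagonal vanishing is the paper's Lemma~\ref{lem:factors split} (proved by the same closed-star disjointness argument), and the factor-by-factor analysis of the diagonal pieces, reducing to Lemmas~\ref{lem:local calculation} and~\ref{lem:other local calculation} via Lemma~\ref{lem:disjointness homomorphism}(1) with basepoint-change conjugations, is exactly the content of Lemma~\ref{lem:factors disjoint}. The injectivity of the further quotient to $\pi_1(B_k(\stargraph{d(v)}/\partial))$ that you flag as the remaining hurdle is precisely what the ``moreover'' clause of Lemma~\ref{lem:local calculation} supplies.
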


Before proving this result, we show how it implies the main theorems.

\begin{proof}[Proof of Theorem \ref{thm:estimate}]
Choosing $W$ as above and any edge $e$, consider the commuting diagram of homomorphisms
\[\xymatrix{
&\pi_1(B_k(\graf))\ar[r]^-\delta& \displaystyle\bigsqcap_v\pi_1(B_k(\graphfont{\Lambda}(v),q_v(x_0))\\
\mathbb{Z}^W\ar@{-->}[ur]^-{\tilde\tau_W^a}\ar[r]_-{\tau_W^a}&\pi_1(B_{k(W)}(\graf))\ar[u]_-{(\sigma_e)_*^{k-k(W)}}\ar[r]^-\delta&\displaystyle\bigsqcap_v\pi_1(B_{k(W)}(\graphfont{\Lambda}(v),q_v(x_0)),\ar[u]
}\]where the righthand vertical map is a product of homomorphisms induced by sink stabilization maps, and the dashed homomorphism is determined by requiring that the triangle commute. From Corollary \ref{cor:homotopy retracts}(4), Lemma \ref{lem:disjointness homomorphism}(1), and Proposition \ref{prop:detection}, it follows that the homomorphisms $\tilde \tau_W^a$ have disjoint conjugates for $a\in\{0,1\}$. Setting $G=\pi_1(B_k(\graf))$, we obtain the chain of inequalities
\begin{align*}
\mathrm{TC}_r(B_{k}(\graf))&=\mathrm{TC}_r(BG)\\
&\geq \mathrm{cd}\left(\mathrm{im}(\tilde \tau_W^0)\times\mathrm{im}(\tilde \tau_W^1)\times G^{r-2})\right)\\
&=\mathrm{cd}\left(\mathbb{Z}^{W}\times\mathbb{Z}^{W_0\sqcup W_1}\times G^{r-2}\right)\\
&\geq\mathrm{cd}_\mathbb{Q}\left(\mathbb{Z}^{W}\times\mathbb{Z}^{W_0\sqcup W_1}\times G^{r-2}\right)
\end{align*}
where the first uses Theorems \ref{thm:asphericity} and the homotopy invariance of $\mathrm{TC}_r$ \cite{BasabeGonzalezRudyakTamaki:HTCS}, the second uses Theorem \ref{thm:lower bound} and third claim of Proposition \ref{prop:detection}, the third uses the first two claims of Proposition \ref{prop:detection}, and the fourth follows by extension of scalars. Setting $c_i=|W_i|$, the claim now follows from the rational K\"{u}nneth isomorphism and the well-known fact that the rational (co)homology of $B_k(\graf)$ is nonzero in degree $\min\left\{\left\lfloor\frac{k}{2}\right\rfloor,\,m(\graf)\right\}$ (see \cite[Lem. 3.18]{AnDrummondColeKnudsen:ESHGBG}, for example).
\end{proof}

\begin{proof}[Proof of Theorem \ref{thm:stability}] For $k\geq 2m(\graf)$, the upper bound $\mathrm{TC}_r(B_k(\graf))\leq rm(\graf)$ is well known; indeed, the configuration space deformation retracts onto a cubical complex of dimension $m(\graf)$ \cite{Swiatkowski:EHDCSG}, so the desired bound follows from standard facts about $\mathrm{TC}_r$ \cite{Rudyak:HATC}. Our argument will establish the matching lower bound.

Assuming that $r\geq2$, we invoke Theorem \ref{thm:estimate} with each $c_i$ maximal and $k$ large enough so that the theorem applies. By our assumption on $\graf$, we have $c_2=0$, so $c_0+c_1=m(\graf)$. We obtain the string of inequalities
\begin{align*}
\mathrm{TC}_r(B_k(\graf))&\geq (r-2)\min\left\{\left\lfloor\frac{k}{2}\right\rfloor,\,m(\graf)\right\}+2(c_0+c_1)+c_2\\
&\geq (r-2)\min\left\{\left\lfloor\frac{2(c_0+c_2)+3c_1}{2}\right\rfloor,\,m(\graf)\right\}+2(c_0+c_1)+c_2\\
&= (r-2)\min\left\{\left\lfloor m(\graf)+\frac{c_1}{2}\right\rfloor,\,m(\graf)\right\}+2m(\graf)\\
&=rm(\graf),
\end{align*} where the first is the conclusion of the theorem, the second is our assumption on $k$, and the third is our assumption on $\graf$.

The case $r=1$ is well known to experts; for example, it follows by combining Theorem \ref{thm:asphericity}, the Eilenberg--Ganea theorem \cite{EilenbergGanea:OLSCAG}, and the same fact about rational (co)homology.
\end{proof}

The pith of Proposition \ref{prop:detection} is contained in the following two lemmas.

\begin{lemma}\label{lem:factors split}
Let $W$ be a set of essential vertices as above. For $w\in W$, the composite $(q_w)_*\circ \tau_W^a$ is trivial on the factor of $\mathbb{Z}^W$ indexed by $v\in W$ if either $v\neq w$ or $a=1$ and $v\in W_2$.
\end{lemma}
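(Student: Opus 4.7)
The plan is to unpack $\tau_W^a$ on the $v$-th factor of $\mathbb{Z}^W$, apply $q_w$ at the level of loops (not classes), and observe that in each of the two cases the resulting loop in $B_{k(W)}(\graphfont{\Lambda}(w))$ is literally constant.

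First I would make explicit the representing loop. Under $(\varphi_W^a)_*$, the generator of the $v$-th factor of $\pi_1((S^1)^W,1)=\mathbb{Z}^W$ goes to the class of the loop $\ell_v$ in $B_{k(W)}(\graf)$ obtained by running $\varphi_v^a$ on the $k(v)$ particles in the embedded copy of $\stargraph{d(v)}$ at $v$, while the particles associated to each $v'\in W\setminus\{v\}$ are held stationary at their positions in $\varphi_{v'}^a(1)$. The change-of-basepoint $\hat\alpha_W^a$ is an isomorphism, so the claim reduces to showing that $q_w\circ \ell_v$ is null-homotopic in $B_{k(W)}(\graphfont{\Lambda}(w))$.

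Next I would dispatch the two cases. If $a=1$ and $v\in W_2$, then by construction $\varphi_v^1$ is the constant map at $\{v_1,v_2\}\in B_2(\stargraph{3})$, so $\ell_v$ is already constant in $B_{k(W)}(\graf)$, and in particular $q_w\circ \ell_v$ is constant. If instead $v\neq w$, I would invoke the standing subdivision hypotheses on $\graf$: distinct essential vertices have disjoint closed stars, and each essential vertex is surrounded by bivalent vertices. The moving particles of $\ell_v$ lie in the closed star of $v$, which is connected and disjoint from $\{w\}$, hence lies entirely in a single connected component $c_v$ of $\graf\setminus\{w\}$. By the definition of $q_w$, all these particles are therefore sent, at every time $t\in S^1$, to the single sink vertex $v_{[c_v]}$ of $\graphfont{\Lambda}(w)$. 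The remaining particles of $\ell_v$ are stationary, located either in the star of $w$ (and thus carried to fixed points of the star of $v_0$ in $\graphfont{\Lambda}(w)$) or in the closed star of some $v'\in W\setminus\{v,w\}$ (and thus carried to a fixed sink vertex of $\graphfont{\Lambda}(w)$). In either subcase $q_w\circ\ell_v$ is a constant loop, so its class is trivial.

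There is no real obstacle; the main thing to verify is that collisions among particles sent to a common sink are allowed, which is exactly the point of working with configuration spaces with sinks, and that $q_w$ is actually defined on each configuration of $\ell_v$, which it is because the only identifications it performs outside the star of $w$ collapse whole components to sinks.
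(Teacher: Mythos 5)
Your proof is correct and follows essentially the same route as the paper: the case $a=1$, $v\in W_2$ is handled by noting $\varphi_v^1$ is constant, and the case $v\neq w$ rests on the image of $\varphi_v^a$ lying in the closed star of $v$, which the subdivision hypothesis makes disjoint from the (closed, hence open) star of $w$, so that $q_w$ collapses the representing loop to a constant. The only difference is that you spell out the details that the paper delegates to a citation of \cite[Lem.~6.1]{Knudsen:TCPGBGSM}.
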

\begin{proof}
By construction, if $a=1$ and $v\in W_2$, then the restriction of $\tau^a_W$ to the factor in question factors through the homomorphism induced by $\varphi_v^1$, which is a constant map. In the case $v\neq w$, the argument is essentially identical to that of \cite[Lem. 6.1]{Knudsen:TCPGBGSM}, the essential point being that the image of the embedding $\varphi_v^a:\stargraph{d(v)}\to \graf$ is contained in the closed star of $v$, hence disjoint from the open star of $w$ by our assumption on the subdivision of $\graf$, whence it follows that the composite $q_w\circ\varphi_v^a$ is constant.
\end{proof}

\begin{lemma}\label{lem:factors disjoint}
Fix $v\in W$. For $a\in\{0,1\}$ the following composite homomorphisms have disjoint conjugates:
\[\mathbb{Z}\xrightarrow{v\in W} \mathbb{Z}^W\xrightarrow{(\varphi_W^a)_*} \pi_1(B_{k(W)}(\graf))\xrightarrow{(q_v)_*}\pi_1(B_{k(W)}(\graphfont{\Lambda}(v))).\] Moreover, each composite is injective provided $a=0$ or $v\notin W_2$.
\end{lemma}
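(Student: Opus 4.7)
Denote by $\Phi_a\colon\mathbb{Z}\to\pi_1(B_{k(W)}(\graphfont{\Lambda}(v)))$ the composite whose properties we are asked to verify. The plan is to factor $\Phi_a$ through the configuration space of the local graph at $v$ and invoke the appropriate local calculation. By the subdivision conventions, the embedded star graphs $\stargraph{d(v)}\hookrightarrow\graf$ for $v\in W$ are pairwise disjoint, so restricting $\varphi_W^a$ to the factor indexed by $v$ leaves the $k(W)-k(v)$ particles associated to other vertices fixed outside the open star of $v$. Under $q_v$, these extra particles land at sink vertices of $\graphfont{\Lambda}(v)$. Writing $\tilde q_v\colon\stargraph{d(v)}\to\graphfont{\Lambda}(v)$ for the quotient identifying leaves according to $\pi_v$, the composite $\Phi_a$ factors, up to basepoint conjugation, as
\[\mathbb{Z}\xrightarrow{(\varphi_v^a)_*}\pi_1(B_{k(v)}(\stargraph{d(v)}))\xrightarrow{(\tilde q_v)_*}\pi_1(B_{k(v)}(\graphfont{\Lambda}(v)))\to \pi_1(B_{k(W)}(\graphfont{\Lambda}(v))),\]
with the last arrow a composition of sink stabilizations.

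For $v\in W_1$, the convention that edges $1$ and $2$ at $v$ lie in different components of $\graf\setminus\{v\}$ forces $\pi_v$ not to identify them; the factorization above (with $k(v)=3$) then coincides with the composite of Lemma \ref{lem:other local calculation} applied to $\pi=\pi_v$ and $k=k(W)\geq 3$, which yields both injectivity and disjoint conjugates directly. For $v\in W_2$, the partition $\pi_v$ is indiscrete so $\graphfont{\Lambda}(v)\cong\stargraph{3}/\partial$; the map $\varphi_v^1$ is constant by construction, so $\Phi_1$ is trivial and has disjoint conjugates with $\Phi_0$ automatically, while for $a=0$ the factorization matches Lemma \ref{lem:local calculation} with $n=3$, giving injectivity of $\Phi_0$.

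The case $v\in W_0$ requires one extra reduction. Compose with the homomorphism $\Psi\colon\pi_1(B_{k(W)}(\graphfont{\Lambda}(v)))\to\pi_1(B_{k(W)}(\stargraph{d(v)}/\partial))$ induced by collapsing all sinks of $\graphfont{\Lambda}(v)$ to a single point; since this collapse commutes with sink stabilization and composes with $\tilde q_v$ to give the full quotient onto $\stargraph{d(v)}/\partial$, the composite $\Psi\circ\Phi_a$ coincides with the one appearing in Lemma \ref{lem:local calculation} for $n=d(v)\geq 4$. That lemma then provides injectivity of $\Psi\circ\Phi_a$ (hence of $\Phi_a$) together with disjoint conjugates of $\Psi\circ\Phi_0$ and $\Psi\circ\Phi_1$. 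The injectivity of $\Psi\circ\Phi_a$ further implies that $\Psi$ restricts to an injection on $\mathrm{im}(\Phi_a)$, whereupon Lemma \ref{lem:disjointness homomorphism}(1) lifts the disjoint conjugates property back to $\pi_1(B_{k(W)}(\graphfont{\Lambda}(v)))$.

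The principal obstacle is this $W_0$ case: Lemma \ref{lem:local calculation} operates only after the full collapse to $\stargraph{d(v)}/\partial$, whereas the target of $\Phi_a$ sits at the intermediate quotient $\graphfont{\Lambda}(v)$. The bridging via Lemma \ref{lem:disjointness homomorphism}(1) is the key technical move, and the remaining two cases then reduce directly to the relevant local calculation.
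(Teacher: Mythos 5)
Your proposal is correct and follows essentially the same route as the paper: factor the composite through the local configuration space at $v$, reduce the $W_0$ case to Lemma \ref{lem:local calculation} via the collapse to $\stargraph{d(v)}/\partial$ and lift disjointness back with Lemma \ref{lem:disjointness homomorphism}(1), handle $W_1$ via Lemma \ref{lem:other local calculation} using the edge-ordering convention, and dispose of $W_2$ by triviality of $\Phi_1$. The paper packages the same reductions into a single commutative diagram of spaces, but the content is identical.
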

\begin{proof}
We treat the disjointness claim first. Suppose first that $v\in W_0$. We have the commutative diagram of spaces
{\small\[
\xymatrix{
S^1\ar[ddd]_-\epsilon\ar[r]^-{v\in W}&(S^1)^W\ar[d]_-{\bigsqcap_{w\in W}\varphi_v^a}\ar[r]^-{\varphi_W^a}&B_{k(W)}(\graf)\ar[r]^-{q_v}&B_{k(W)}(\graphfont{\Lambda}(v))\ar[d]\\
&\bigsqcap_{w\in W}B_{k(w)}\left(\stargraph{d(w)}\right)\ar[r]^-\subseteq&B_{k(W)}\left(\bigsqcup_{w\in W}\stargraph{d(w)}\right)\ar[u]&B_{k(W)}(\stargraph{d(v)}/\partial)\\
&&&B_{2}(\stargraph{d(v)}/\partial)\ar[u]\\
B_2(\stargraph{3})\ar[r]^-{\iota_a}&B_2(\stargraph{d(v)})\ar[uu]^-{v\in W}\ar[rr]&&B_2(\graphfont{\Lambda}(v))\ar[u]&\\
}
\]}where the unmarked horizontal maps are either induced maps or sink stabilization. For commutativity, we again use that the image of $\varphi_w^a$ is disjoint from the open star of $v$ for $w\neq v$. According to Lemma \ref{lem:local calculation}, each of the resulting composite maps $S^1\to B_{k(W)}(\stargraph{d(v)}/\partial)$  induces an injection on fundamental groups. Thus, by Lemma \ref{lem:disjointness homomorphism}(1), it suffices to show that these induced homomorphisms have disjoint conjugates for $a\in\{0,1\}$, but this was proven in Lemma \ref{lem:local calculation} by commutativity. The case of $v\in W_1$ is essentially the same, with Lemma \ref{lem:other local calculation} playing the role of Lemma \ref{lem:local calculation} (here we use our assumption on the orderings of edges at separating vertices). For $v\in W_2$, there is nothing to prove, since the homomorphism for $a=1$ is trivial in this case.

The injectivity claim is essentially immediate from the injectivity clauses of Lemmas \ref{lem:local calculation} and and \ref{lem:other local calculation}.
\end{proof}

\begin{proof}[Proof of Proposition \ref{prop:detection}]
By Lemma \ref{lem:factors split}, the homomorphism $\delta\circ \tau_W^a$ splits as a product of homomorphisms indexed by $W$. The same result implies that the factor indexed by $v\in W_2$ is trivial if $a=1$. Thus, it suffices to show that the factors indexed by $v\in W$ have disjoint conjugates for $a\in\{0,1\}$; and that they are injective if either $a=0$ or $v\notin W_2$. Since the vertical homomorphisms are isomorphisms in the commutative diagram
\[
\xymatrix{
&&\pi_1(B_{k(W)}(\graf),x_0)\ar[r]^-{(q_v)_*}&\pi_1(B_{k(W)}(\graphfont{\Lambda}(v)), q_v(x_0))\\
\mathbb{Z}\ar[r]^-{v\in W}&\mathbb{Z}^W\ar[ur]^-{\tau_W^a}\ar[r]_-{(\varphi_W^a)_*}&\pi_1(B_{k(W)}(\graf),\varphi_W^a(1))\ar[u]_-{\hat\alpha_W^a}\ar[r]^-{(q_v)_*}&\pi_1(B_{k(W)}(\graphfont{\Lambda}(v)), q_v(\varphi_W^a(1)))\ar[u]_-{\hat\beta_W^a}
}
\] where $\beta=q_v\circ\alpha_W^a$, this claim is equivalent to Lemma \ref{lem:factors disjoint}.
\end{proof}

\bibliographystyle{amsalpha}
\providecommand{\bysame}{\leavevmode\hbox to3em{\hrulefill}\thinspace}
\providecommand{\MR}{\relax\ifhmode\unskip\space\fi MR }
\providecommand{\MRhref}[2]{%
  \href{http://www.ams.org/mathscinet-getitem?mr=#1}{#2}
}
\providecommand{\href}[2]{#2}

\end{document}